\newtheorem{theorem}{Theorem}[section]
\newtheorem{lemma}[theorem]{Lemma}
\newtheorem{proposition}[theorem]{Proposition}
\newtheorem{corollary}[theorem]{Corollary}
\theoremstyle{remark}
\newtheorem*{completion1*}{Proof of Theorem \ref{thm:main1}, assuming \eqref{eq:assump} holds}
\newtheorem*{completion2*}{Proof of Theorem \ref{thm:main2}, assuming \eqref{eq:assump} holds}
\newtheorem*{completion3*}{Proof of Theorem \ref{thm:main3}}
\newtheorem{remark}[theorem]{Remark}
\newcommand{\PP}{\mathbb{P}}
\DeclarePairedDelimiter\floor{\lfloor}{\rfloor}
\begin{document}
\author{Stephen Muirhead  \\
\tiny{University College London}  \\
\tiny \lowercase{s.muirhead@ucl.ac.uk}
 }

\title[Two-site localisation in the BTM with slowly varying traps]{Two-site localisation in the Bouchaud trap \\ model with slowly varying traps}

\begin{abstract}
We consider the Bouchaud trap model on the integers in the case that the trap distribution has a slowly varying tail at infinity. We prove that the model eventually localises on exactly two sites with overwhelming probability. This is a stronger form of localisation than has previously been established in the literature for the Bouchaud trap model on the integers in the case of regularly varying traps. Underlying this result is the fact that the sum of a sequence of i.i.d.\ random variables with a slowly varying tail is asymptotically dominated by the maximal term.
\end{abstract}

\subjclass[2010]{60K37 (Primary) 82C44, 60G50, 60F10 (Secondary)}
\keywords{Bouchaud trap model, localisation, slowly varying tail}
\thanks{This research was supported by a Graduate Research Scholarship from University College London and the Leverhulme Research Grant RPG-2012-608 held by Nadia Sidorova. We would like to thank Nadia Sidorova, David Croydon and two anonymous referees for their helpful suggestions.}
\date{\today}

\maketitle

\section{Introduction}
\label{sec:intro}
\subsection{The Bouchaud trap model}
The Bouchaud trap model on the integers (BTM) is the continuous-time Markov chain $\{X_s\}_{s \ge 0}$ on $\mathbb{Z}$ with transition rates
\begin{align*} 
w_{z \to y} := \begin{cases}
\frac{1}{2 \sigma_z}, \,  & \text{if } |z-y| = 1, \\
0, & \text{otherwise} ,
\end{cases}
\end{align*}
where $\sigma:= \{\sigma_z\}_{z \in \mathbb{Z} }$ is a collection of independent identically distributed (i.i.d.) strictly-positive random variables known as the (random) \textit{trapping landscape}. The BTM describes a continuous-time random walk on $\mathbb{Z}$ in which the waiting time at each visit to a site $z$ is independent and distributed exponentially with mean $\sigma_z$, and the subsequent site is chosen uniformly at random from among the nearest neighbours of $z$. The BTM has its origins in the statistical physics literature, where it was proposed as a simple effective model for the dynamics of spin-glasses on certain time-scales (see, e.g., \cite{Bouchaud92}). For a general overview of the BTM see \cite{BenArous06}; for a discussion of links to other trap models see \cite{BenArous14}.

Although the BTM may be defined on arbitrary graphs by analogy with the above (see, e.g., \cite{BenArous06}), the BTM on the integers is of particular interest because it may exhibit \textit{localisation}, that is, at large times its probability mass function may concentrate on small subsets of the domain $\mathbb{Z}$. Intuitively, localisation occurs if the influence of the largest traps that the BTM has visited by a certain time dominates the influence of all other traps; naturally the existence and strength of this localisation depends on the upper tail of the trap distribution.  Previous work (see, e.g., \cite{FIN99, FIN02}) has studied localisation in the BTM in the case of integrable and regularly varying traps with index $\alpha \in (0,1)$. The present work continues this study in the case of slowly varying traps, which can be considered as the limiting case $\alpha = 0$.

\subsection{Localisation in the BTM} Let $\mathbf{P}$ denote the law of the trapping landscape $\sigma$, and define the c\`{a}dl\`{a}g, non-decreasing and unbounded function
\[ L(x) := \frac{1}{\mathbf{P}(\sigma_0 > x)} .  \]
Previous studies of localisation in the BTM have focused on the case that (i) $L$ is integrable at infinity; or (ii) $L$ is regularly varying with index $\alpha \in (0, 1)$ at infinity.\footnote{Recall that a function $L$ is said to be \textit{regularly varying} with index $\alpha > 0$ at infinity if $\lim_{u \to \infty} L(uv)/L(u) = v^\alpha$ for any $v > 0$.} In the first case, the BTM is known to satisfy a version of Donsker's invariance principle, i.e.\ the BTM, properly rescaled, converges to Brownian motion in the $t \to \infty$ limit. By contrast, it has been shown (in \cite{FIN99}) that in the second case the BTM, properly rescaled, converges to a spatially-subordinated Brownian motion now known as the FIN diffusion, the simplest of a more general class of spatially-subordinated Brownian motions introduced in \cite{BenArous13}. As a consequence, the BTM exhibits localisation in the sense that
\begin{align}
\label{eq:weakint}
\limsup_{s \to \infty} \, \sup_{z \in \mathbb{Z}} P_\sigma(X_s = z) \neq 0 \quad \mathbf{P}\text{-almost surely,} 
\end{align}
where $\{X_s\}_{s \ge 0}$ denotes the BTM in the trapping landscape $\sigma$, with $P_\sigma$ its law under the initial condition $X_0 = 0$. In other words, for almost all trapping landscapes there exist arbitrarily large times at which the BTM has non-negligible probability mass located at a single site.

In \cite{Bertin03}, it was suggested that a stronger form of localisation than \eqref{eq:weakint} should hold in the $\alpha \to 0$ limit, namely that at large times the probability mass of the BTM should eventually be carried by just two sites with overwhelming probability (with respect to the trapping landscape $\sigma$). Although the paper gave heuristic justifications, to the best of our knowledge this has not yet been rigorously established in the literature. 

The present paper takes up this suggestion, considering the case that $L$ satisfies the \textit{slow variation} property
\begin{align}
\label{eq:sv}
 \lim_{u \to \infty} \frac{L(uv)}{L(u)} \to 1 , \quad \text{for any } v > 0.
 \end{align}
Our main result (in Theorem \ref{thm:main1} below) is to confirm the prediction of \cite{Bertin03} that the BTM exhibits two-site localisation with overwhelming probability.  Indeed we make this statement more precise, describing the  two localisation sites explicitly as well as determining the limiting proportion of probability mass located at each site.

More generally, our results contribute to the growing understanding that the limiting properties of slowly varying trap models are described through extremal processes, in the same way that the limiting properties of regularly varying trap models are described through stable subordinators. In particular, it is well-established that slowly varying trap models may exhibit extremal ageing, especially on sub-exponential time-scales (see \cite{Gayrard12, Gun13} for extremal ageing in the BTM on the complete graph; \cite{BenArous12, Bovier13} for extremal ageing in the dynamics of spin-glass models). The scaling limit of certain slowly varying trap models have also been shown to converge to extremal-type processes (see, e.g., \cite{Croydon13, Croydon14}). Our localisation result is a natural analogue of this description for the BTM.

\subsection{Our results}
For the remainder of the paper we assume that the trapping landscape satisfies the slow variation property \eqref{eq:sv}. To describe our results explicitly, we define some notation. For each $t \ge 0$, define the level
$$ \ell_t := \min \{ s \ge 0 : s \, L(s) \ge t \} , $$
remarking that this is well-defined since $L$ is c\`{a}dl\`{a}g. Further, denote by $Z_t^{(1)}$ (respectively $Z_t^{(2)}$) the closest site to the origin on the positive (respectively negative) half-line where the trap value exceeds the level $\ell_t$, i.e.\
$$ Z_t^{(1)} := \min \{ z \in \mathbb{Z^+} : \sigma_z > \ell_t   \}  \quad \text{and} \quad Z_t^{(2)} := \max \{ z \in \mathbb{Z^-} : \sigma_z > \ell_t \}, $$
and let $\Gamma_t := \{Z_t^{(1)}, Z_t^{(2)} \}$, remarking that $\Gamma_t$ is $\mathbf{P}$-measurable.  Abbreviate $r_t := L(\ell_t)$, and note that $\ell_t, r_t \to \infty$ as $t \to \infty$.  Recall that $\{X_s\}_{s \ge 0}$ denotes the BTM in the trapping landscape $\sigma$, with $P_\sigma$ its law under the initial condition $X_0 = 0$. 

Our main result is to establish the property of two-site localisation.

\begin{theorem}[Two-site localisation in probability]
\label{thm:main1}
As $t \to \infty$,
\[  P_\sigma( X_t \in \Gamma_t ) \to 1 \quad \text{in } \mathbf{P} \text{-probability} .\]
\end{theorem}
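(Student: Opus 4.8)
The plan is to work on a ``good'' landscape event $G_t$ with $\mathbf{P}(G_t)\to 1$ and to show that, on it, $P_\sigma(X_t\in\Gamma_t)\to 1$. On $G_t$ the picture is the following: $Z_t^{(1)}$ and $-Z_t^{(2)}$ are of order $r_t$; the next sites beyond them with trap value exceeding $\ell_t$ lie at distance $\gtrsim r_t/M_t$ for a slowly growing $M_t$; the values $\sigma_{Z_t^{(1)}},\sigma_{Z_t^{(2)}}$ both exceed $M_t\ell_t$; and the sum of the $\sim r_t$ traps strictly between $Z_t^{(2)}$ and $Z_t^{(1)}$ is $o(\ell_t)$. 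The dynamical claim is then that the walk reaches $\Gamma_t$ in time $o(t)$, thereafter never escapes $[Z_t^{(2)},Z_t^{(1)}]$ before time $t$, and spends all but $o(t)$ of $[0,t]$ sitting on a single site of $\Gamma_t$; the remaining, and most delicate, point is to deduce that the \emph{fixed} deterministic time $t$ typically falls inside one of those sojourns.

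The first ingredient is the landscape estimates. Since $\mathbf{P}(\sigma_0>x)=1/L(x)$ with $L$ slowly varying, $Z_t^{(1)}$ and $-Z_t^{(2)}$ are geometric with mean $L(\ell_t)=r_t$, while $\mathbf{P}(\sigma_{Z_t^{(i)}}>M\ell_t)=L(\ell_t)/L(M\ell_t)\to 1$ for every fixed $M$ by slow variation; a diagonal argument then produces a sequence $M_t\to\infty$ along which all of the above size and spacing conditions have probability tending to $1$ (using $L(\ell_t)=\ell_t^{o(1)}$ to fix the various slowly varying/decaying sequences consistently). Crucially, Karamata's theorem applied to the slowly varying $1/L$ gives $\mathbf{E}[\sigma_0\wedge\ell_t]=\int_0^{\ell_t}\mathbf{P}(\sigma_0>x)\,dx\sim \ell_t/L(\ell_t)=\ell_t/r_t$, whence $\mathbf{E}[\sigma_0\,\id\{\sigma_0\le\ell_t\}]=\mathbf{E}[\sigma_0\wedge\ell_t]-\ell_t/r_t=o(\ell_t/r_t)$; so the mean of the sum of the $\sim r_t$ shallow traps in $(Z_t^{(2)},Z_t^{(1)})$ is $o(\ell_t)$, and by Markov this sum — hence each of those traps — is $o(\ell_t)$ in probability. (The sharper fact recalled in the abstract, that such a sum is in fact asymptotic to its maximum, is the mechanism that will pin down the precise limiting proportions in the refined statements.)

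The second ingredient is the random walk estimates, which use only that the jump chain of the BTM is simple random walk on $\mathbb{Z}$, so that hitting probabilities are the gambler's ruin ones and are independent of $\sigma$. With $G$ the simple random walk Green's function on $(Z_t^{(2)},Z_t^{(1)})$, the expected hitting time of $\Gamma_t$ from $0$ is $\sum_j G(0,j)\sigma_j$; as $\sum_j G(0,j)$ equals the expected number of steps to exit, namely $(-Z_t^{(2)})Z_t^{(1)}$, and the interior traps are independent of the endpoints, its $\mathbf{P}\otimes P_\sigma$-expectation is $\mathbf{E}[(-Z_t^{(2)})Z_t^{(1)}]\cdot\mathbf{E}[\sigma_0\mid\sigma_0\le\ell_t]=O(r_t^2)\cdot o(\ell_t/r_t)=o(t)$, so by Markov the hitting time of $\Gamma_t$ is $o(t)$ in probability, and the walk necessarily arrives \emph{at} $\Gamma_t$ since a nearest-neighbour walk leaves an interval only through its endpoints. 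Next, the number of visits to $Z_t^{(1)}$ by real time $t$ is at most of order $t/\sigma_{Z_t^{(1)}}=o(r_t)$, because the successive holding times there are $\sigma_{Z_t^{(1)}}\gg\ell_t$ times i.i.d.\ exponentials; each departure from $Z_t^{(1)}$ reaches a site outside $[Z_t^{(2)},Z_t^{(1)}]$ before returning to $\Gamma_t$ only with probability $O(M_t/r_t)$ (gambler's ruin over a gap $\gtrsim r_t/M_t$), so with probability $\to 1$ the walk never leaves $[Z_t^{(2)},Z_t^{(1)}]$ and never even switches between $Z_t^{(1)}$ and $Z_t^{(2)}$ before time $t$; and the total time off $\Gamma_t$ after the first hit is a sum over $o(r_t)$ shallow excursions, each of mean at most a constant times the shallow-trap sum (the Green's function from a site adjacent to the boundary is bounded), hence $o(\ell_t)$ each and $o(t)$ in total.

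Combining these, with $\mathbf{P}$-probability $\to 1$ the walk is, off a random set of times of Lebesgue measure $o(t)$ inside $[0,t]$, sitting at one fixed site $Z_t^{(i)}$ of $\Gamma_t$. To upgrade this time-average statement to the pointwise $P_\sigma(X_t\in\Gamma_t)\to 1$ I would invoke a renewal estimate: after the ($o(t)$) hitting time the trajectory alternates sojourns at $Z_t^{(i)}$ of mean $\sigma_{Z_t^{(i)}}$ with shallow excursions of mean $o(\ell_t)$, so the probability that the deterministic time $t$ lies in an excursion is of order $o(\ell_t)/\sigma_{Z_t^{(i)}}\to 0$, the degenerate case $\sigma_{Z_t^{(i)}}\gtrsim t$ (a single sojourn covering $[\text{hit},t]$) being immediate from the memorylessness of the exponential holding time. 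I expect this last step to be the main obstacle: everything before it is an essentially deterministic accounting on $G_t$, whereas controlling the \emph{law} of $X_t$ at the single time $t$ — rather than a time average — requires the renewal argument together with a split according to the size of $\sigma_{Z_t^{(i)}}$ relative to $t$, which is itself governed by how slowly $L$ varies (e.g.\ for $L=\log$ one has $\sigma_{Z_t^{(i)}}\gg t$ and the case is trivial, but this fails for more slowly varying $L$). A secondary, purely bookkeeping, difficulty is the consistent choice of the slowly growing/decaying sequences defining $G_t$.
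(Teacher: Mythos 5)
Your overall structure parallels the paper's: hit $\Gamma_t$ quickly, confine the walk near the hit site, and then conclude $X_t$ is at that site. Your landscape estimates via Karamata and your hitting-time bound via the Green's function are fine alternatives to the paper's tools (which use functional limit theorems for the extremal/sum processes and a gamma stochastic-domination estimate, respectively). The real issue is the step you yourself flag as the main obstacle, and there the gap is genuine.

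The "renewal estimate" you invoke — that the probability the deterministic time $t$ falls inside an excursion is of order $\mu_{\mathrm{exc}}/\sigma_{\bar y}$ — is the \emph{stationary} proportion of time spent off $\bar y$, and it is not a valid bound on $P_\sigma(X_t\neq\bar y\mid\tau^1_t\le t)$ at an arbitrary finite time. The direct memorylessness argument handles $\sigma_{\bar y}\gg t-\tau^1_t$, and a renewal theorem handles $\sigma_{\bar y}\ll t-\tau^1_t$ (many cycles), but in the intermediate regime $\sigma_{\bar y}\asymp t-\tau^1_t$ neither applies as stated, and the renewal theorem's applicability also depends on quantitative control of the excursion-length distribution that your sketch does not supply. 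What you are missing is the following observation, which the paper isolates as a lemma (Proposition~\ref{prop:MCconv}) and which makes the whole case split unnecessary: for a reversible, finite-state, continuous-time Markov chain started at a state $\bar y$, the function $s\mapsto P(M_s=\bar y)$ is a positive combination of decaying exponentials (by the spectral theorem), hence is monotonically decreasing to $\pi(\bar y)$. Thus $P(M_s=\bar y)\ge\pi(\bar y)$ for \emph{all} $s\ge0$, uniformly in the elapsed time. To exploit this the paper couples $X$ (after $\tau^1_t$) to an auxiliary chain $\hat X^t$ on a narrow window $I^{\bar y}_t$ with \emph{periodic} boundary conditions — a finite reversible chain started at $\bar y$ whose equilibrium distribution is proportional to $\sigma$ on $I^{\bar y}_t$, and hence, on $\mathcal C^h_t$, has $\pi(\bar y)=1-o(1)$. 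On the event $\{\tau^1_t\le t<\tau^2_t\}$ the two processes agree at time $t$, giving $P_\sigma(X_t=\bar y)\to1$ without any case analysis on the size of $\sigma_{\bar y}$ relative to $t$. Incidentally, your confinement interval $[Z^{(2)}_t,Z^{(1)}_t]$ contains both localisation sites, so its equilibrium mass is split between them; the paper's narrower window $I^{\bar y}_t$, containing only $\bar y$, makes $\pi(\bar y)\to1$ automatic, which is one more reason the paper's formulation is cleaner. If you carry your renewal approach through carefully you will in effect rederive the spectral monotonicity fact; as written, the proposal leaves precisely that fact unproved.
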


Further, we determine the limiting proportion of probability mass located at each site and obtain the scaling properties of the localisation set. This allows us to establish the single-time scaling limit of the BTM. 

\begin{theorem}[Distribution between localisation sites]
\label{thm:main2}
For $i = 1,2$, as $t \to \infty$,
\[ P_\sigma(X_t = Z_t^{(i)} )  + \frac{|Z_t^{(i)}|}{\sum_{z \in \Gamma_t} |z| }  \to   1  \quad \text{in } \mathbf{P} \text{-probability.} \]
\end{theorem}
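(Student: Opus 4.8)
The plan is to reduce Theorem \ref{thm:main2} to an elementary gambler's ruin identity for the jump chain of the BTM, using Theorem \ref{thm:main1} to absorb the error terms. It suffices to treat $i=1$: once we know that $P_\sigma(X_t = Z_t^{(1)}) \to |Z_t^{(2)}|/(|Z_t^{(1)}|+|Z_t^{(2)}|)$ in $\mathbf{P}$-probability, the case $i=2$ follows from the identity $P_\sigma(X_t = Z_t^{(2)}) = P_\sigma(X_t \in \Gamma_t) - P_\sigma(X_t = Z_t^{(1)})$ together with Theorem \ref{thm:main1}. Let $A$ be the event that the BTM visits $Z_t^{(1)}$ strictly before $Z_t^{(2)}$. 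Since the jump chain of the BTM is recurrent simple random walk on $\mathbb{Z}$ started at $0$, whose law does not see the trap depths, the gambler's ruin formula gives, for \emph{every} realisation of $\sigma$, the exact value $P_\sigma(A) = |Z_t^{(2)}|/(|Z_t^{(1)}|+|Z_t^{(2)}|)$. Thus it is enough to show that $P_\sigma\bigl(\{X_t = Z_t^{(1)}\}\,\triangle\,A\bigr) \to 0$ in $\mathbf{P}$-probability.

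Let $B$ be the event that the walk visits both $Z_t^{(1)}$ and $Z_t^{(2)}$ within $[0,t]$. A short case check gives the inclusion $\{X_t = Z_t^{(1)}\}\,\triangle\,A \subseteq B \cup \{X_t \notin \Gamma_t\}$: if $X_t = Z_t^{(1)}$ but $A$ fails, then $Z_t^{(2)}$ was visited before $Z_t^{(1)}$, which was itself visited by time $t$, so both were visited in $[0,t]$; conversely, if $A$ holds but $X_t \neq Z_t^{(1)}$, then either $X_t \notin \Gamma_t$ or $X_t = Z_t^{(2)}$, and in the latter case $Z_t^{(2)}$ was visited by time $t$ while $A$ forces $Z_t^{(1)}$ to have been visited even earlier, again putting us in $B$. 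Since $P_\sigma(X_t \notin \Gamma_t) \to 0$ in $\mathbf{P}$-probability by Theorem \ref{thm:main1}, the task reduces to showing $P_\sigma(B) \to 0$ in $\mathbf{P}$-probability.

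To estimate $P_\sigma(B)$, fix $i \in \{1,2\}$ and bound the probability that, started from $Z_t^{(i)}$, the walk reaches $Z_t^{(3-i)}$ within time $t$; by the strong Markov property at the first visit to $Z_t^{(i)}$ and monotonicity of this probability in the length of the time window, this dominates the corresponding half of $P_\sigma(B)$. Let $W$ be the number of visits to $Z_t^{(i)}$ made before $Z_t^{(3-i)}$ is first reached. Because the jump chain is simple random walk, the successive excursions away from $Z_t^{(i)}$ are i.i.d.\ and each reaches $Z_t^{(3-i)}$ before returning to $Z_t^{(i)}$ with the same probability $q := \tfrac12\,(|Z_t^{(1)}|+|Z_t^{(2)}|)^{-1}$ (one step, then gambler's ruin on $[Z_t^{(2)},Z_t^{(1)}]$); hence $W$ is geometric with parameter $q$ and is independent of the i.i.d.\ $\mathrm{Exp}(1/\sigma_{Z_t^{(i)}})$ holding times accrued at $Z_t^{(i)}$. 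A geometric sum of these exponentials is again exponential, now with rate $q/\sigma_{Z_t^{(i)}}$, and it is a lower bound for the time to reach $Z_t^{(3-i)}$; therefore this probability is at most $1 - e^{-qt/\sigma_{Z_t^{(i)}}} \le t\bigl(2\,\sigma_{Z_t^{(i)}}(|Z_t^{(1)}|+|Z_t^{(2)}|)\bigr)^{-1}$, and consequently $P_\sigma(B) \le t\,(|Z_t^{(1)}|+|Z_t^{(2)}|)^{-1}\bigl(\sigma_{Z_t^{(1)}}^{-1}+\sigma_{Z_t^{(2)}}^{-1}\bigr)$.

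It remains to see that this last bound tends to $0$ in $\mathbf{P}$-probability, for which I would combine three inputs: (i) $t \le \ell_t r_t$, immediate from the definitions of $\ell_t$ and $r_t$; (ii) for each fixed $\epsilon > 0$ the event $\{|Z_t^{(1)}|+|Z_t^{(2)}| \ge \epsilon r_t\}$ has $\mathbf{P}$-probability bounded below, uniformly in $t$, by a quantity that tends to $1$ as $\epsilon \to 0$, since $|Z_t^{(i)}|$ is geometric with mean $r_t$; and (iii) $\sigma_{Z_t^{(i)}}/\ell_t \to \infty$ in $\mathbf{P}$-probability, which is where slow variation \eqref{eq:sv} enters, as $\sigma_{Z_t^{(i)}}$ is distributed as $\sigma_0$ conditioned to exceed $\ell_t$ and $\mathbf{P}(\sigma_0 > K\ell_t \mid \sigma_0 > \ell_t) = L(\ell_t)/L(K\ell_t) \to 1$ for every fixed $K$. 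On the intersection of $\{|Z_t^{(1)}|+|Z_t^{(2)}| \ge \epsilon r_t\}$ with $\{\min_i \sigma_{Z_t^{(i)}} \ge K\ell_t\}$ the displayed bound for $P_\sigma(B)$ is at most $2/(\epsilon K)$, so letting $t \to \infty$, then $K \to \infty$, then $\epsilon \to 0$ yields the claim. I expect the main obstacle to be precisely step (iii) and its meshing with the spatial scale: slow variation gives $\sigma_{Z_t^{(i)}} \gg \ell_t$ but \emph{not} $\sigma_{Z_t^{(i)}} \gg \ell_t r_t = t$, so a single deep trap does not hold the walk for the whole window on its own, and it is essential that the crossing probability carries the compensating spatial factor $(|Z_t^{(1)}|+|Z_t^{(2)}|)^{-1}$ of order $r_t^{-1}$; relatedly, the lower bound on this spatial factor is only available in $\mathbf{P}$-probability rather than almost surely, which forces the iterated limit in $K$ and $\epsilon$ in place of a single high-probability event.
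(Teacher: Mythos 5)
Your proof is correct, and it takes a mildly different route from the paper's, which is worth spelling out. The paper's proof of this theorem is a two-liner: it observes $P_\sigma(\bar y = Z_t^{(1)}) = |Z_t^{(2)}| / \sum_{z\in\Gamma_t}|z|$ by Proposition~\ref{prop:hittingprob}, and combines this with equation~\eqref{eq:final}, the fact $P_\sigma(X_t = \bar y) \to 1$ already established inside the proof of Theorem~\ref{thm:main1} (on the good event $\mathcal{A}^h_t\cap\mathcal{B}^h_t\cap\mathcal{C}^h_t$). You instead treat Theorem~\ref{thm:main1} as a black box (only $P_\sigma(X_t \in \Gamma_t) \to 1$) and then bridge the gap to ``$X_t$ is the \emph{first-hit} site of $\Gamma_t$'' with your own crossing-time estimate: the walk cannot cross from $Z_t^{(i)}$ to $Z_t^{(3-i)}$ within time $t$ because it would have to revisit the deep trap $\sim |Z_t^{(1)}|+|Z_t^{(2)}| \asymp r_t$ times, each at cost $\sim \sigma_{Z_t^{(i)}} \gg \ell_t$ (via slow variation), so the crossing takes time $\gg r_t\ell_t \ge t$. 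That is exactly the intuition the paper encodes in the event $\mathcal{B}^h_t$ and Proposition~\ref{prop:tau2}, so you are in effect reproving a weaker version of equation~\eqref{eq:final} rather than reusing it. Your version is more self-contained --- it does not touch the $h_t$-machinery or the inhomogeneity events --- but it is also somewhat redundant relative to what the paper has already set up. The subsidiary steps are all sound: the symmetric-difference inclusion into $B \cup \{X_t\notin\Gamma_t\}$, the geometric-sum-of-exponentials lower bound on the crossing time, the geometric distribution of $|Z_t^{(i)}|$, and the observation $t \le \ell_t r_t$ from the definitions. One small cosmetic point: a sharper constant in your bound is $P_\sigma(B) \le \tfrac{t}{2(|Z_t^{(1)}|+|Z_t^{(2)}|)}(\sigma_{Z_t^{(1)}}^{-1}+\sigma_{Z_t^{(2)}}^{-1})$, but since you only need the bound up to a constant this does not matter.
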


\begin{theorem}[Scaling of the localisation set]
\label{thm:main3}
As $t \to \infty$, 
\[ r_t^{-1} \left(Z_t^{(1)}, - Z_t^{(2)} \right) \Rightarrow (\mathcal{E}_1, \mathcal{E}_2)  \quad \text{in } \mathbf{P} \text{-law,} \]
where $\{\mathcal{E}_i\}_{i = 1,2}$ are independent exponential random variables with unit mean. 
\end{theorem}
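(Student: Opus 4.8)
The plan is to recognise that $Z_t^{(1)}$ and $-Z_t^{(2)}$ are nothing more than independent geometric random variables whose success probability tends to zero, so that the claimed convergence is an instance of the elementary fact that a geometric law with small success probability, rescaled by that probability, is approximately exponential. The BTM dynamics play no role; this is a soft statement about the i.i.d.\ trapping landscape alone.

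First I would record the distributional facts. Since the traps $\{\sigma_z\}_{z \in \mathbb{Z}}$ are i.i.d.\ and $\ell_t$ is deterministic, $Z_t^{(1)} = \min\{z \ge 1 : \sigma_z > \ell_t\}$ is geometrically distributed on $\{1,2,\dots\}$ with success probability $p_t := \mathbf{P}(\sigma_0 > \ell_t) = 1/L(\ell_t) = 1/r_t$; that is, $\mathbf{P}(Z_t^{(1)} > m) = (1-p_t)^m$ for every integer $m \ge 0$. Identically, $-Z_t^{(2)}$ is geometric with the same parameter $p_t$. Moreover $Z_t^{(1)}$ is measurable with respect to $\{\sigma_z\}_{z \ge 1}$ and $Z_t^{(2)}$ with respect to $\{\sigma_z\}_{z \le -1}$; since these two families are independent, so are $Z_t^{(1)}$ and $Z_t^{(2)}$.

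Next I would pass to the limit. As noted in the excerpt, $r_t \to \infty$ as $t \to \infty$, hence $p_t \to 0$, and for each fixed $x > 0$,
\[ \mathbf{P}\big(r_t^{-1} Z_t^{(1)} > x\big) = \mathbf{P}\big(Z_t^{(1)} > x/p_t\big) = (1-p_t)^{\lfloor x/p_t \rfloor} = \exp\big(\lfloor x/p_t \rfloor \log(1-p_t)\big) \longrightarrow e^{-x}, \]
using $\log(1-p_t) = -p_t + O(p_t^2)$ and $p_t \lfloor x/p_t \rfloor \to x$. Thus $r_t^{-1} Z_t^{(1)} \Rightarrow \mathcal{E}_1$, and identically $r_t^{-1}(-Z_t^{(2)}) \Rightarrow \mathcal{E}_2$, with $\mathcal{E}_1, \mathcal{E}_2$ unit-mean exponentials. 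Finally, by the independence established above, the joint survival function factorises, $\mathbf{P}(r_t^{-1}Z_t^{(1)} > x,\, r_t^{-1}(-Z_t^{(2)}) > y) = \mathbf{P}(r_t^{-1}Z_t^{(1)} > x)\,\mathbf{P}(r_t^{-1}(-Z_t^{(2)}) > y)$ for every $t$, so it converges to $e^{-x}e^{-y}$ for all $x,y \ge 0$; this is the joint survival function of an independent pair of unit-mean exponentials, which gives the asserted weak convergence under $\mathbf{P}$.

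I do not expect any serious obstacle. The only points needing (minor) care are the bookkeeping identity $p_t = 1/r_t$, which is immediate from the definition $L(x) = 1/\mathbf{P}(\sigma_0 > x)$; the fact that $r_t \to \infty$, already established; and the harmless $>$ versus $\ge$ distinctions in the definitions of $\ell_t$ and $Z_t^{(i)}$, which do not affect the limit.
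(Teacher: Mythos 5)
Your proposal is correct and is essentially the paper's own argument: both identify $Z_t^{(1)}$ and $-Z_t^{(2)}$ as independent geometrics with success probability $1/r_t$, compute the (joint) survival function $(1-1/r_t)^{\floor{x r_t}+\floor{y r_t}} \to e^{-x-y}$, and conclude. The paper writes the joint survival function in one line while you treat the marginals and then invoke independence, but this is only a cosmetic difference.
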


\begin{corollary}
\label{cor:main4}
As $t \to \infty$, 
\[ \left(P_\sigma(X_t = Z_t^{(1)}), P_\sigma(X_t = Z_t^{(2)})   \right)  \Rightarrow  \left(\mathcal{U}, 1-\mathcal{U} \right)   \quad \text{in } \mathbf{P} \text{-law,} \]
where $\mathcal{U}$ is a uniform random variable on $[0, 1]$.
\end{corollary}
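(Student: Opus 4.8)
The plan is to combine Theorems \ref{thm:main2} and \ref{thm:main3} via a Slutsky-type argument, and then to identify the limiting law explicitly. Throughout, write $S_t := \sum_{z \in \Gamma_t} |z| = Z_t^{(1)} - Z_t^{(2)}$, where we have used that $Z_t^{(1)} > 0 > Z_t^{(2)}$ by definition. Applying Theorem \ref{thm:main2} to both $i = 1$ and $i = 2$ shows that
\[ \left( P_\sigma(X_t = Z_t^{(1)}), \, P_\sigma(X_t = Z_t^{(2)}) \right) - \left( \frac{|Z_t^{(1)}|}{S_t}, \, \frac{|Z_t^{(2)}|}{S_t} \right) \to (0,0) \quad \text{in } \mathbf{P}\text{-probability}, \]
so by the standard fact that adding a vector converging to zero in probability does not affect convergence in law, it suffices to prove that $\bigl(|Z_t^{(1)}|/S_t, \, |Z_t^{(2)}|/S_t\bigr) \Rightarrow (\mathcal{U}, 1 - \mathcal{U})$ in $\mathbf{P}$-law.

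For this remaining statement, observe that $|Z_t^{(1)}|/S_t + |Z_t^{(2)}|/S_t = 1$ identically, and that
\[ \frac{|Z_t^{(1)}|}{S_t} = \frac{r_t^{-1} Z_t^{(1)}}{\, r_t^{-1} Z_t^{(1)} + r_t^{-1}(-Z_t^{(2)}) \,}, \]
so the vector $\bigl(|Z_t^{(1)}|/S_t, \, |Z_t^{(2)}|/S_t\bigr)$ is the image of $r_t^{-1}\bigl(Z_t^{(1)}, -Z_t^{(2)}\bigr)$ under the map $g(x,y) := \bigl( x/(x+y), \, y/(x+y) \bigr)$, which is continuous on $(0,\infty)^2$. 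Since $(\mathcal{E}_1, \mathcal{E}_2)$ takes values in $(0,\infty)^2$ with $\mathbf{P}$-probability one, the continuous mapping theorem applied to the convergence in Theorem \ref{thm:main3} yields
\[ \left( \frac{|Z_t^{(1)}|}{S_t}, \, \frac{|Z_t^{(2)}|}{S_t} \right) \Rightarrow \left( \frac{\mathcal{E}_1}{\mathcal{E}_1 + \mathcal{E}_2}, \, \frac{\mathcal{E}_2}{\mathcal{E}_1 + \mathcal{E}_2} \right) \quad \text{in } \mathbf{P}\text{-law}. \]
Finally, since $\mathcal{E}_1$ and $\mathcal{E}_2$ are independent exponential random variables with unit mean, $\mathcal{E}_1/(\mathcal{E}_1 + \mathcal{E}_2)$ has the $\mathrm{Beta}(1,1)$ distribution, i.e.\ it is uniformly distributed on $[0,1]$; setting $\mathcal{U} := \mathcal{E}_1/(\mathcal{E}_1 + \mathcal{E}_2)$ and noting that $\mathcal{E}_2/(\mathcal{E}_1 + \mathcal{E}_2) = 1 - \mathcal{U}$ completes the argument.

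Since each step is a soft probabilistic manipulation, I do not expect a genuine obstacle here. The only points deserving a little care are: (i) phrasing the Slutsky step at the level of the joint law of the two-dimensional random vectors (for instance via bounded Lipschitz test functions, or characteristic functions) rather than coordinate by coordinate; and (ii) checking that the discontinuity set of $g$ — namely $\{(x,y) : x + y = 0\}$, in particular the origin — is null under the limit law $(\mathcal{E}_1, \mathcal{E}_2)$, which is immediate from the continuity of the exponential distribution. Everything else is bookkeeping.
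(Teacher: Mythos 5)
Your overall strategy --- combine Theorems \ref{thm:main2} and \ref{thm:main3} via Slutsky, then apply the continuous mapping theorem and the Beta$(1,1)$ identity --- is exactly the ``trivial'' consequence the paper has in mind; the paper gives no proof, merely asserting that Corollaries \ref{cor:main4} and \ref{cor:main5} follow trivially from the main results. However, your application of Theorem \ref{thm:main2} contains an index error. Theorem \ref{thm:main2} says that $P_\sigma(X_t = Z_t^{(i)}) + |Z_t^{(i)}|/\sum_{z \in \Gamma_t}|z| \to 1$, which gives $P_\sigma(X_t = Z_t^{(i)}) \approx 1 - |Z_t^{(i)}|/\sum_{z \in \Gamma_t}|z| = |Z_t^{(3-i)}|/\sum_{z \in \Gamma_t}|z|$ --- the walk is more likely to sit at the \emph{closer} site, so the mass is \emph{inversely} proportional to distance, as the paper's remark emphasises. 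Your displayed approximation
\[ \left( P_\sigma(X_t = Z_t^{(1)}), \, P_\sigma(X_t = Z_t^{(2)}) \right) - \left( \frac{|Z_t^{(1)}|}{\sum_{z \in \Gamma_t}|z|}, \, \frac{|Z_t^{(2)}|}{\sum_{z \in \Gamma_t}|z|} \right) \to (0,0) \]
is therefore false as written; the second vector should be $\bigl(|Z_t^{(2)}|/\sum_{z \in \Gamma_t}|z|, \, |Z_t^{(1)}|/\sum_{z \in \Gamma_t}|z|\bigr)$. You are rescued by the fact that the limiting law $(\mathcal{U}, 1-\mathcal{U})$ is exchangeable, so $(\mathcal{E}_1/(\mathcal{E}_1+\mathcal{E}_2), \mathcal{E}_2/(\mathcal{E}_1+\mathcal{E}_2))$ and its coordinate swap have the same distribution --- but the intermediate claim is wrong and the argument would break if the target law were asymmetric, so the slip should be corrected. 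A small additional nit: you overload the symbol $S_t$, which the paper already reserves for $\sum_{Z_t^{(2)} < z < Z_t^{(1)}} \sigma_z$; choose a different name for $\sum_{z \in \Gamma_t}|z|$ to avoid a clash.
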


\begin{corollary}[Single-time scaling limit]
\label{cor:main5}
As $t \to \infty$,
$$ r^{-1}_t X_t \Rightarrow \,  y_1 \delta_{-x_1} + y_2 \delta_{x_2} \quad \text{in } \mathbf{P} \text{-law,} $$
where $\{x_i\}_{i = 1,2}$ are independent standard exponential random variables, $\delta_x$ is a Dirac measure at the point $x$, and each $i = 1,2$ satisfies $y_i := 1 - x_i / \sum_{j = 1,2} x_j $.
\end{corollary}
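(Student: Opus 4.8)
This corollary is essentially a repackaging of Theorems \ref{thm:main1}--\ref{thm:main3}, and the plan is to make the repackaging precise. First I would fix the mode of convergence. For each $t$ let $\mu_t$ be the random sub-probability measure $\mu_t(\cdot) := P_\sigma(r_t^{-1} X_t \in \cdot)$ on $\mathbb{R}$, viewed as a random element of the Polish space $(\mathcal{M}_{\le 1}(\mathbb{R}), d_{BL})$ of sub-probability measures under the bounded-Lipschitz metric $d_{BL}$ (which metrises weak convergence), and write $\mu_\infty := y_1 \delta_{-x_1} + y_2 \delta_{x_2}$, noting $y_1 + y_2 = 1$ so that $\mu_\infty$ is in fact a probability measure. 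The assertion is that $\mu_t \Rightarrow \mu_\infty$ in $\mathbf{P}$-law. I would prove this in two steps: reduce $\mu_t$ to its two-atom truncation, then read off the joint scaling limit of the two atoms and their weights from Theorems \ref{thm:main2} and \ref{thm:main3}.

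\emph{Step 1.} Put $\tilde\nu_t := P_\sigma(X_t = Z_t^{(1)})\, \delta_{r_t^{-1} Z_t^{(1)}} + P_\sigma(X_t = Z_t^{(2)})\, \delta_{r_t^{-1} Z_t^{(2)}}$. Since $Z_t^{(1)}$ and $Z_t^{(2)}$ lie on opposite half-lines, $\Gamma_t$ is a two-point set and $\mu_t - \tilde\nu_t$ equals the (positive) sub-measure $P_\sigma(X_t \notin \Gamma_t,\, r_t^{-1} X_t \in \cdot)$, of total mass $1 - P_\sigma(X_t \in \Gamma_t)$. Hence $d_{BL}(\mu_t, \tilde\nu_t) \le \|\mu_t - \tilde\nu_t\|_{\mathrm{TV}} = 1 - P_\sigma(X_t \in \Gamma_t) \to 0$ in $\mathbf{P}$-probability by Theorem \ref{thm:main1}. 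By a Slutsky argument in the metric space $(\mathcal{M}_{\le 1}(\mathbb{R}), d_{BL})$, it is therefore enough to show $\tilde\nu_t \Rightarrow \mu_\infty$ in $\mathbf{P}$-law.

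\emph{Step 2.} Write $A_t := r_t^{-1} Z_t^{(1)}$ and $B_t := -r_t^{-1} Z_t^{(2)}$, both eventually positive, and observe $\sum_{z \in \Gamma_t} |z| = r_t (A_t + B_t)$. Theorem \ref{thm:main3} gives $(A_t, B_t) \Rightarrow (\mathcal{E}_1, \mathcal{E}_2)$ in $\mathbf{P}$-law with $\mathcal{E}_1, \mathcal{E}_2$ i.i.d.\ standard exponentials, while Theorem \ref{thm:main2} says $P_\sigma(X_t = Z_t^{(1)}) - \tfrac{B_t}{A_t + B_t} \to 0$ and $P_\sigma(X_t = Z_t^{(2)}) - \tfrac{A_t}{A_t + B_t} \to 0$ in $\mathbf{P}$-probability. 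The maps $(a,b) \mapsto \tfrac{b}{a+b}$ and $(a,b) \mapsto \tfrac{a}{a+b}$ are continuous on $(0,\infty)^2$, which carries the whole mass of the limit law of $(A_t,B_t)$, so the continuous mapping theorem together with a further Slutsky step yields
\[ \Big( A_t,\; B_t,\; P_\sigma(X_t = Z_t^{(1)}),\; P_\sigma(X_t = Z_t^{(2)}) \Big) \;\Rightarrow\; \Big( \mathcal{E}_1,\; \mathcal{E}_2,\; \tfrac{\mathcal{E}_2}{\mathcal{E}_1 + \mathcal{E}_2},\; \tfrac{\mathcal{E}_1}{\mathcal{E}_1 + \mathcal{E}_2} \Big) \]
jointly in $\mathbf{P}$-law. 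Relabelling $x_2 := \mathcal{E}_1$ and $x_1 := \mathcal{E}_2$ (again i.i.d.\ standard exponentials) and using $y_i = 1 - x_i/(x_1 + x_2)$, the right-hand side is $(x_2, x_1, y_2, y_1)$. Finally, the map $\Psi(a,b,p,q) := p \delta_a + q \delta_{-b}$ is continuous from $\mathbb{R}^2 \times [0,1]^2$ into $(\mathcal{M}_{\le 1}(\mathbb{R}), d_{BL})$, so one more application of the continuous mapping theorem gives $\tilde\nu_t = \Psi(A_t, B_t, P_\sigma(X_t = Z_t^{(1)}), P_\sigma(X_t = Z_t^{(2)})) \Rightarrow y_2 \delta_{x_2} + y_1 \delta_{-x_1} = \mu_\infty$ in $\mathbf{P}$-law. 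Combined with Step 1 this is the claim.

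I do not expect a substantial obstacle: all three input theorems concern the same trapping landscape $\sigma$, so the only points needing care are (i) that convergences in $\mathbf{P}$-probability (Theorems \ref{thm:main1} and \ref{thm:main2}) combine with the convergence in $\mathbf{P}$-law of Theorem \ref{thm:main3} via Slutsky, and (ii) that no mass escapes to infinity in the limit --- which here follows directly from the total-variation bound in Step 1, bypassing any tightness estimate. The mildly delicate bit is simply to fix the state space $(\mathcal{M}_{\le 1}(\mathbb{R}), d_{BL})$ and phrase the continuous-mapping and Slutsky steps there cleanly.
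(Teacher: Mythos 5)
Your argument is correct and is precisely the routine combination of Theorems \ref{thm:main1}--\ref{thm:main3} that the paper has in mind when it asserts (in the remark following the results) that Corollary \ref{cor:main5} ``follows trivially from the main results''; the paper gives no further proof. Your Step 1 reduction to the two-atom measure via the total-variation bound $1-P_\sigma(X_t\in\Gamma_t)$, and your Step 2 continuous-mapping-plus-Slutsky passage from $(A_t,B_t)$ to the weighted Dirac sum, supply exactly the details the paper omits, with the relabelling $x_2:=\mathcal E_1$, $x_1:=\mathcal E_2$ correctly reconciling the notation of Theorem \ref{thm:main3} with that of the corollary.
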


\begin{remark}
Theorems \ref{thm:main1}--\ref{thm:main3} collectively imply that, for a large fixed time $t$, the BTM is overwhelmingly likely to be located at either of the two (random) sites in $\Gamma_t$ and that the probability mass of the BTM will be asymptotically distributed between these two sites in (inverse) proportion to their distance to the origin. By Corollary 
\ref{cor:main4}, this implies that the probability mass of the BTM is distributed uniformly between the two localisation sites. Corollary \ref{cor:main5} summarises these results in a single-time scaling limit, but is considerably less precise, since it is not sensitive to the shape of the probability mass function of the BTM on fine distance scales. Note that Corollaries \ref{cor:main4} and \ref{cor:main5} follow trivially from the main results.

Remark that the two-site localisation result in Theorem \ref{thm:main1} holds in $\mathbf{P}$-probability. Quenched localisation results for the BTM with slowly varying traps -- i.e.\ localisation results that hold $\mathbf{P}$-almost surely, such as \eqref{eq:weakint} in the regularly varying case -- will be the subject of upcoming work.
\end{remark}

\begin{remark}
To gain some intuition about our results, recall the fundamental property of sequences of i.i.d.\ random variables with a slowly varying tail, namely that the sum is asymptotically dominated by the maximal term; this suggests that the dynamics of the BTM should be dominated by the effect of the deepest visited trap. Second, by standard properties of i.i.d.\ sequences, the spacing between the successive record deepest traps on the positive (respectively negative) half-line grows linearly with the distance from the origin. Hence, for the BTM to venture from the record deepest trap $z$ to an even deeper trap, it must travel a distance approximately $|z|$, and so will return to $z$ approximately $|z|$ times before doing so; such a displacement takes approximately a time $\sigma_z |z|$. Finally, standard extreme value estimates give $L(\sigma_z)$ as the correct scale for the location $|z|$ of the first trap of depth $\sigma_z$. Hence, this displacement takes approximately $\sigma_z L(\sigma_z)$ time. As such, we expect the BTM to be located on the first site $z$ that it visits such that $\sigma_z L(\sigma_z) > t$, i.e.\ the first site in $\Gamma_t$ that it hits. This is essentially the content of Theorems  \ref{thm:main1} and \ref{thm:main2}.
\end{remark}

\begin{remark}
Under the stronger assumption on $L$ that
\[ \lim_{u \to \infty} \frac{L(u / L(u))}{L(u)} \to 1, \]
the definition of $\Gamma_t$ can be considerably simplified by letting $\ell_t := t$. This is analogous to how simplified limit theorems are available under the above assumption in \cite{Croydon14, Kasahara86}. For simplicity, we choose not to prove this additional result here.
\end{remark}

\subsection{Outline of the proof}
\label{sec:outline}
The proof of our main results proceeds in the following steps:
\begin{enumerate}[leftmargin=1.2cm]
\item For a large fixed $t$, we show that the BTM is overwhelmingly likely to have hit the set $\Gamma_t$ before time $t$; 
\item Assuming that the event in (1) occurs, let $\bar y \in \Gamma_t$ denote the first site in $\Gamma_t$ hit by the BTM. We then show that the BTM is very unlikely to have exited a certain narrow region $I_t^{\bar y}$ around the site $\bar y$ by time $t$;  
\item Assuming that the events in (1) and (2) both occur, we use the equilibrium distribution of the BTM on an interval with periodic boundary conditions to show that the BTM is overwhelmingly likely to be located \textit{at} the site $\bar y$ at time $t$, establishing Theorem \ref{thm:main1}.
\item Remark that (1)--(3) above imply that the BTM is overwhelmingly likely to be located at the site in $\Gamma_t$ that it first hits. To finish the proof, we use simple properties of random walks and some basic extreme value theory to establish Theorems \ref{thm:main2} and \ref{thm:main3}. 
\end{enumerate}

The rest of the paper is organised as follows. In Section \ref{sec:two} we consider the probability law $P_\sigma$ of the BTM under the assumption that certain inhomogeneity properties of the trapping landscape $\sigma$ hold. Under this assumption we carry out the program outlined above, establishing the main results in Theorems \ref{thm:main1}--\ref{thm:main3}. In Section \ref{sec:trapping} we analyse the trapping landscape, showing that the inhomogeneity properties indeed do hold with overwhelming probability.
\section{The BTM in an inhomogeneous trapping landscape}
\label{sec:two}
In this section we complete the proof of Theorems \ref{thm:main1}--\ref{thm:main3} under the assumption that certain inhomogeneity properties of the trapping landscape $\sigma$ hold. In order to define these properties, we shall need an auxiliary function $h_t$ that tends to infinity (i.e.\ such that $h_t \to \infty$ as $t \to \infty$). We shall think of $h_t$ as being arbitrarily slowly growing, and indeed we shall require $h_t$ to satisfy $h_t^2 = o(r_t)$ as $t \to \infty$.\footnote{Note that we use $x_t = o(y_t)$ to mean that $\lim_{t \to \infty} x_t/y_t = 0$.} Further, define the quantities
\[  S_t := \sum_{Z_t^{(2)} < z < Z_t^{(1)}} \sigma_z, \qquad   d_t := \max_{z \in \Gamma_t} |z|  \qquad \text{and} \qquad m_t :=  \min_{z \in \Gamma_t}  \sigma_z , \]
and the $h$-dependent quantity
\[ \bar S_t := \sum_{i = 1,2} \ \sum_{ 1 \le |z - Z_t^{(i)}| < r_t/h_t } \sigma_z. \]
We may now define the inhomogeneity properties that we require, namely the ($\mathbf{P}$-measurable, $h$-dependent) events
\[
\mathcal{A}^h_t :=  \left\{ S_t d_t < \frac{t}{ h_t} \right\} , \quad \mathcal{B}^h_t := \left\{ m_t   >  \frac{t h^2_t}{r_t} \right\}  \quad \text{and} \quad \mathcal{C}^h_t := \left\{ \bar S_t  < \frac{\ell_t}{h_t} \right\} .\]
In Section \ref{sec:trapping}, we show that we can choose an $h_t$ growing sufficiently slowly such that, as $t \to \infty$,
\begin{align}
\label{eq:assump}  \mathbf{P} \left( \mathcal{A}^h_t, \mathcal{B}^h_t, \mathcal{C}^h_t \right) \to 1. 
\end{align}
For the remainder of this section we work under the assumption that \eqref{eq:assump} holds for a certain choice of $h_t$, showing how the main Theorems \ref{thm:main1}--\ref{thm:main3} follow from this assumption.

\subsection{Preliminary properties of random walks and Markov chains}
\label{sec:prop}

Here we collect some well-known results on random walks and Markov chains that will be useful in what follows. Let $D_n$ be the simple discrete-time random walk (SRW) on $\mathbb{Z}$ based at the origin. For a level $l > 0$ and a site $z \in \mathbb{Z}$ define the stopping time 
\[ a_l := \min \{ n : |D_n| \ge l \} ,\]
and the local time
\[ \mathcal{L}^l_z := | \{ n < a_l : D_n = z \}|. \]

\begin{proposition}[Bounds on local time for the SRW]
\label{prop:localtime}
As $l \to \infty$, both
\[ \frac{ \max_{z} \mathcal{L}^l_z }{l}   \quad \text{and} \quad \frac{\mathcal{L}^l_0}{l} \]
are bounded in probability above and away from zero.
\end{proposition}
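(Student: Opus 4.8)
The plan is to handle the two quantities in tandem, since both reduce to standard facts about the excursion structure of the SRW stopped upon exiting the interval $[-l, l]$. The guiding principle is that $a_l$ — the exit time — is of order $l^2$, while the number of distinct sites visited is at most $2l+1$, so the \emph{average} local time over visited sites is of order $l$; the content of the proposition is that the maximum and the value at the origin are of the same order, with controlled fluctuations.

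First I would establish the upper bounds. For $\mathcal{L}^l_0/l$, note that each return of $D$ to the origin before leaving $[-l,l]$ requires, from the origin, either reaching $l$ or reaching $-l$ or returning to $0$; by the gambler's ruin estimate the walk started at $0$ reaches distance $l$ before returning to $0$ with probability of order $1/l$, so $\mathcal{L}^l_0$ is stochastically dominated by a geometric random variable with success probability of order $1/l$, hence $\mathcal{L}^l_0/l$ is bounded in probability (indeed its mean is $O(1)$, so Markov's inequality suffices). For $\max_z \mathcal{L}^l_z$ one can use that the expected total time spent before exit is $\EE[a_l] = l^2$ (by optional stopping applied to $D_n^2 - n$), and then either (i) invoke that $\max_z \mathcal{L}^l_z$ for the SRW killed on exiting an interval of length $2l$ has mean of order $l$ — this follows from a union bound over the $O(l)$ sites combined with the fact that the local time at any fixed site is, as above, dominated by a geometric with parameter of order $1/l$ shifted by the probability of ever reaching that site — or (ii) appeal directly to the Ray–Knight description, under which the local time profile of the SRW run until exiting $[-l,l]$, read at the site adjacent to the boundary and moving inward, is a squared-Bessel-type process whose running maximum is of order $l$. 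Either route gives $\EE[\max_z \mathcal{L}^l_z] = O(l)$ and hence boundedness in probability via Markov.

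For the lower bounds — boundedness away from zero — the key observation is that $a_l \ge$ (number of visited sites) is not enough; instead I would argue that with probability bounded below the walk spends time of order $l^2$ before exit (e.g. $\PP(a_l \ge \varepsilon l^2)$ is bounded below for small $\varepsilon$, again by optional stopping / Paley–Zygmund applied to $a_l$, using $\EE[a_l] = l^2$ and $\EE[a_l^2] = O(l^4)$), and this time is distributed over at most $2l+1$ sites, so $\max_z \mathcal{L}^l_z \ge a_l/(2l+1)$ is of order $l$ on that event. For $\mathcal{L}^l_0$ bounded away from zero, I would use that the walk started at $0$ makes a number of returns to $0$ before exit that is geometric with parameter of order $1/l$, so with probability bounded below it makes at least $\varepsilon l$ returns; equivalently, $\PP(\mathcal{L}^l_0 \ge \varepsilon l)$ is bounded below because $\mathcal{L}^l_0$ is geometric with mean of order $l$.

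The main obstacle is the lower bound on $\max_z \mathcal{L}^l_z$ and on $\mathcal{L}^l_0$, i.e.\ controlling that these do not degenerate: the upper bounds are routine first-moment estimates, but "bounded away from zero in probability" requires a genuine lower tail estimate. The cleanest way around this is the exact geometric description of $\mathcal{L}^l_0$ noted above (which instantly gives both the upper and lower control at the origin, and then $\max_z \mathcal{L}^l_z \ge \mathcal{L}^l_0$ transfers the lower bound to the maximum for free), so in fact I would lead with the origin estimate and derive the maximum's lower bound as an immediate corollary, reserving the excursion/optional-stopping argument only for the upper bound on $\max_z \mathcal{L}^l_z$.
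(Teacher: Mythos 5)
Your route differs genuinely from the paper's. The paper proves the proposition in a single stroke by citing a functional invariance principle for random walk local times (Borodin and Ibragimov, Theorem~7.6): the rescaled local time profile $\bigl(l^{-1}\mathcal{L}^l_{\floor{zl}}\bigr)_{z\in[-1,1]}$ converges weakly, in $D([-1,1])$ with the $J_1$ topology, to $(\nu^1_z)_{z\in[-1,1]}$, the Brownian local time profile at the first exit of $(-1,1)$; both claims then follow at once because $\nu^1_0$ and $\sup_z\nu^1_z$ are a.s.\ positive and finite. Your treatment of the origin, and hence of both lower bounds, is more elementary and in my view cleaner: the gambler's-ruin computation shows that $\mathcal{L}^l_0$ is an \emph{exact} geometric random variable with escape probability $1/l$, so $\mathcal{L}^l_0/l$ converges in law to a standard exponential, and the trivial inequality $\max_z\mathcal{L}^l_z\ge\mathcal{L}^l_0$ transfers the lower bound to the maximum with no further work. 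This avoids the heavy citation for three of the four assertions.

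However, your route~(i) for the \emph{upper} bound on $\max_z\mathcal{L}^l_z$ does not close. A union bound over the $O(l)$ sites, each with a local time stochastically dominated by a geometric of parameter of order $1/l$, gives $\PP\bigl(\max_z\mathcal{L}^l_z>s\bigr)\le C\,l\,(1-1/l)^s$, and for this to be $o(1)$ you need $s$ of order $l\log l$; integrating the tail correspondingly yields $\EE\bigl[\max_z\mathcal{L}^l_z\bigr]=O(l\log l)$, not $O(l)$ as you assert. So route~(i) only shows that $\max_z\mathcal{L}^l_z/(l\log l)$ is bounded in probability, which is strictly weaker than the proposition. Your route~(ii) --- Ray--Knight, or equivalently the functional limit theorem for local times that the paper cites --- is the ingredient that genuinely delivers the $O(l)$ bound on the maximum, and it cannot be replaced by the naive union bound. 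The disjunction in your argument therefore has to be resolved in favour of~(ii); with that correction the proof is right, and the origin/lower-bound portion is a genuine simplification over the paper's appeal to the full functional CLT.
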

\begin{proof}
These are simple consequences of invariance principles for random walk local times (see, e.g., \cite[Chapter 10]{Revesz90}). Indeed, these invariance principles actually imply the stronger result (see \cite[Theorem 7.6]{Borodin94}) that, as $l \to \infty$,
\[  \left( l^{-1} \mathcal{L}_{\floor{z l}}^l \right)_{z \in [-1, 1]} \stackrel{J_1}{\Rightarrow}   (\nu_z^1 )_{z \in [-1, 1] },  \]
where $\nu_z^1$ denotes the local time of Brownian motion at the point $z$ at the first hitting time of $\pm 1$, and $\stackrel{J_1}{\Rightarrow}$ denotes weak convergence in the Skorokhod space $D([0,1])$ of real-valued c\`{a}dl\`{a}g functions on $[0,1]$ equipped with the $J_1$ topology; see \cite{Whitt02} for a description.
\end{proof}

\begin{proposition}[Hitting probability for the SRW]
\label{prop:hittingprob}
For any $x \in \mathbb{Z}^+$ and $y \in \mathbb{Z}^-$, 
\[ \PP ( b_x < b_y )  = \frac{y}{x+y}, \]
where $b_z := \min \{n > 0: D_n = z \}$.
\end{proposition}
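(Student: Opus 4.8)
The plan is to prove this as the classical gambler's ruin identity, realised via the optional stopping theorem applied to the martingale $\{D_n\}$. Fix $x \in \mathbb{Z}^+$ and $y \in \mathbb{Z}^-$ and set $T := b_x \wedge b_y$, the first exit time of the walk from the finite set $\{y+1, \dots, x-1\}$; since $y < 0 < x$ this set contains the origin, so $T \ge 1$ and the restriction $n > 0$ in the definition $b_z := \min\{n > 0 : D_n = z\}$ causes no issue. First I would record the standard fact that the exit time of the SRW from a bounded interval has geometrically decaying tails, so in particular $\EE[T] < \infty$ and $D$ exits through one of the two endpoints almost surely; equivalently one may simply invoke recurrence of the SRW to get $\PP(T < \infty) = 1$.

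Next, since $D_n = D_n - D_0$ is a martingale with bounded increments and $T$ is a stopping time with $\EE[T] < \infty$, the optional stopping theorem gives $\EE[D_T] = D_0 = 0$. On $\{b_x < b_y\}$ we have $D_T = x$, and on the complementary event $\{b_y < b_x\}$ we have $D_T = y$, and these two events partition the sample space. Writing $p := \PP(b_x < b_y)$, this yields
\[ 0 = \EE[D_T] = p\,x + (1-p)\,y, \]
and solving gives $p = \tfrac{y}{y-x} = \tfrac{|y|}{x+|y|}$, which is the asserted identity.

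As an alternative I would mention the harmonicity route: by a one-step decomposition the function $k \mapsto \PP_k(b_x < b_y)$ on $\{y, \dots, x\}$ satisfies the discrete Laplace equation $f(k) = \tfrac12 f(k-1) + \tfrac12 f(k+1)$, hence is affine in $k$, and is pinned by the boundary values $f(x) = 1$ and $f(y) = 0$; evaluating the resulting affine function at $k = 0$ reproduces the same answer. There is no substantive obstacle here — the result is entirely standard — and the only point meriting a word of care is the integrability hypothesis for optional stopping, which is precisely why I would first note the geometric tail bound on $T$ (or, in the bounded-increment formulation of the theorem, the almost-sure finiteness of $T$).
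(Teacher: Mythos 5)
Your proof is correct and takes the same approach as the paper: the paper's entire proof is the one-line observation that the identity follows from the optional stopping theorem applied to the SRW martingale, which is precisely what you carry out (with the details of integrability and the exit decomposition filled in). Your closing reading of the statement's $y$ as $|y|$ is also the right one, since the formula as written would not parse with a literal negative $y$.
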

\begin{proof}
This well-known fact follows from the optional stopping theorem.
\end{proof}

\begin{proposition}[Monotonic convergence of a Markov chain to equilibrium]
\label{prop:MCconv}
Let $M_t$ be an irreducible, finite-state, time-homogeneous, continuous-time Markov chain, initialised at a state $0$, whose transition rates $w$ satisfy the detailed balance condition, i.e.\ there exists a non-negative vector $\pi$ such that
\[ \pi(x) w_{x \to y}= \pi(y) w_{y \to x} \] 
for each pair of states $x$ and $y$. Then $\pi$ is the unique equilibrium distribution for $M_t$ and satisfies, as $t \to \infty$,
\[ \PP(M_t = 0) \downarrow \pi(0) \quad \text{monotonically}. \]
\end{proposition}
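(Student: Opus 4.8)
The plan is to exploit the reversibility of $M_t$ via spectral decomposition of the generator. First I would recall that for an irreducible, finite-state, time-homogeneous continuous-time Markov chain with generator $Q$ (where $Q_{x\to y} = w_{x\to y}$ for $x \neq y$ and $Q_{x\to x} = -\sum_{y \neq x} w_{x \to y}$), the detailed balance condition $\pi(x) w_{x\to y} = \pi(y) w_{y\to x}$ forces $\pi$ to be stationary: summing over $x$ gives $\sum_x \pi(x) Q_{x \to y} = 0$. Irreducibility plus finiteness then gives uniqueness of the equilibrium distribution (after normalising $\pi$ to a probability vector, which we may do since $\pi$ is non-negative and not identically zero). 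So the first paragraph of the proof is essentially bookkeeping with standard Markov chain facts.

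The substantive part is the monotone convergence $\PP(M_t = 0) \downarrow \pi(0)$. Here I would use the fact that reversibility makes $Q$ self-adjoint on $\ell^2(\pi)$, i.e.\ on the space $\mathbb{R}^{S}$ (with $S$ the finite state space) equipped with the inner product $\langle f, g \rangle_\pi := \sum_x \pi(x) f(x) g(x)$. Hence $Q$ has a real orthonormal eigenbasis $\{\varphi_k\}$ with real eigenvalues $0 = \lambda_0 > \lambda_1 \ge \lambda_2 \ge \cdots$, where $\varphi_0 \equiv 1$; the strict inequality $\lambda_0 > \lambda_1$ and the fact that all nonzero eigenvalues are negative both follow from irreducibility. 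Writing the transition function as $p_t(x,y) = \pi(y)\sum_k e^{\lambda_k t}\varphi_k(x)\varphi_k(y)$, the return probability starting from $0$ is
\[ \PP(M_t = 0) = p_t(0,0) = \pi(0) \sum_k e^{\lambda_k t} \varphi_k(0)^2 = \pi(0)\Big( 1 + \sum_{k \ge 1} e^{\lambda_k t}\varphi_k(0)^2 \Big). \]
Every summand $e^{\lambda_k t}\varphi_k(0)^2$ is non-negative and, since $\lambda_k < 0$ for $k \ge 1$, strictly decreasing in $t$; therefore the whole sum decreases to $0$ as $t \to \infty$, giving $\PP(M_t = 0) \downarrow \pi(0)$ monotonically.

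I do not anticipate a serious obstacle here — every ingredient is classical. The one point that needs a little care is justifying the spectral representation of $p_t(0,0)$ as a sum of non-negative, individually monotone terms: this is exactly where reversibility (rather than mere irreducibility) is used, since it guarantees the eigenfunctions are real and that $\varphi_k(0)^2$ appears with a non-negative coefficient. An alternative, avoiding explicit diagonalisation, is to write $p_t(0,0) = \langle \delta_0/\pi(0), e^{tQ}(\delta_0/\pi(0))\rangle_\pi$ and note that $e^{tQ} = e^{(t/2)Q}e^{(t/2)Q}$ with $e^{(t/2)Q}$ self-adjoint, so $p_t(0,0) = \pi(0)^{-1}\|e^{(t/2)Q}\delta_0\|_\pi^2$; monotonicity in $t$ then follows from the fact that $e^{sQ}$ is a contraction on $\ell^2(\pi)$ for $s \ge 0$ (as $Q \le 0$), applied along the semigroup, and the limit is identified by projecting onto the kernel of $Q$, which is spanned by the constants. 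Either route is short; I would present the eigen-expansion version as it makes the monotonicity and the limit simultaneously transparent.
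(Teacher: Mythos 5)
Your proof is correct and takes essentially the same route as the paper: spectral decomposition of the generator, reversibility giving a real eigenbasis with non-positive eigenvalues, and then monotone decay of each term $e^{\lambda_k t}\varphi_k(0)^2$ to zero. The only cosmetic difference is that the paper writes $\PP(M_t=0)=\sum_i e^{\lambda_i t}\varphi_i(0)^2$ (absorbing the $\pi(0)$ factor into the eigenfunction normalisation) and infers $\lambda_i \le 0$ from boundedness of $\PP(M_t=0)$ rather than from negative semi-definiteness of $Q$, but these are the same argument.
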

\begin{proof}
This is a well-known result from continuous-time Markov chain theory. It can be proved by considering the spectral representation of $\PP(M_t = 0)$ in terms of the eigenvalues $\lambda_i$ and eigenfunctions $\varphi_i$ of the generator of $M_t$, i.e.\
\[ \PP(M_t = 0) = \sum_{i} e^{\lambda_{i} t} \varphi_i^2(0) , \]
recalling that the detailed balance condition ensures that each $\lambda_i$ and $\varphi_i$ is real. Since $\PP(M_t = 0)$ is bounded as $t \to \infty$, each $\lambda_i$ must satisfy $\lambda_i \le 0$, resulting in the monotonic convergence of $\PP(M_t = 0)$ to its equilibrium density.
\end{proof}

\subsection{Proving the main results} 

We are now in a position to carry out the program in Section \ref{sec:outline}, establishing Theorems \ref{thm:main1}--\ref{thm:main3} under the assumption that \eqref{eq:assump} holds. The properties $\mathcal{A}^h_t$, $\mathcal{B}^h_t$ and $\mathcal{C}^h_t$ will be used in steps (1)--(3) of the program respectively.\\

\noindent \textbf{Step 1: Hitting the localisation set.} Fix a scaling function $h_t$ such that \eqref{eq:assump} holds. For each trapping landscape $\sigma$ and time $t > 0$, consider the BTM $\{X_s\}_{s \ge 0}$ in the trapping landscape $\sigma$ and define the random time 
\[\tau^1_t := \inf \{s \ge 0: X_s \in \Gamma_t \}. \] 

\begin{proposition}
\label{prop:tau1}
Assume $\mathcal{A}^h_t$ holds. As $t \to \infty$,
\[ P_\sigma ( \tau_t^1 \le t ) \to 1. \]
\end{proposition}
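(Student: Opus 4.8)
The plan is to show that, on the event $\mathcal{A}^h_t$, the BTM travels fast enough to reach one of the two sites in $\Gamma_t$ before time $t$, by controlling the total time spent in the region strictly between $Z_t^{(2)}$ and $Z_t^{(1)}$ before the walk first escapes it. Write $\{X_s\}$ via its discrete-time skeleton: the embedded jump chain is a simple random walk $D_n$ on $\mathbb{Z}$ started at $0$, and the holding times are independent exponentials with mean $\sigma_{D_n}$ at the $n$-th step. The open interval $(Z_t^{(2)}, Z_t^{(1)})$ has length at most $2d_t$, and by the definition of $\Gamma_t$ none of the interior sites has trap value exceeding $\ell_t$ — but more to the point, the sum of all the interior traps is exactly $S_t$. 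So I would first reduce $\tau^1_t$ to the time the embedded walk takes to exit this interval.

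First I would let $N$ be the number of jumps the embedded SRW $D_n$ makes before it first leaves $(Z_t^{(2)}, Z_t^{(1)})$; since this interval has width at most $2d_t$, we have $N \le a_{2 d_t}$ in the notation of Proposition \ref{prop:localtime}. The total time elapsed is $\tau^1_t = \sum_{n < N} e_n$ where $e_n$ is the $n$-th holding time. Conditionally on the path of $D$, $\EE[\tau^1_t \mid D] = \sum_{n<N} \sigma_{D_n} = \sum_{z} \mathcal{L}_z \, \sigma_z$ where $\mathcal{L}_z$ is the number of visits to $z$ before exiting, and this is bounded by $(\max_z \mathcal{L}_z) \sum_{Z_t^{(2)}<z<Z_t^{(1)}} \sigma_z = (\max_z \mathcal{L}_z)\, S_t$. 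By Proposition \ref{prop:localtime}, $\max_z \mathcal{L}_z / d_t$ is bounded in probability (under the SRW law), so with probability tending to $1$ we have $\max_z \mathcal{L}_z \le C d_t$ for a constant $C$; on $\mathcal{A}^h_t$ this gives $\EE[\tau^1_t\mid D] \le C S_t d_t < C t/h_t$. Then a conditional Markov inequality, together with the fact that the holding times have exponential tails so $\tau^1_t$ concentrates around its conditional mean, upgrades this to $P_\sigma(\tau^1_t \le t) \to 1$: indeed $P_\sigma(\tau^1_t > t) \le P_\sigma(\max_z \mathcal{L}_z > Cd_t) + \EE[\,\id\{\max \mathcal{L} \le Cd_t\}\, \min(1, \EE[\tau^1_t\mid D]/t)\,] \le o(1) + C/h_t \to 0$.

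I would present this cleanly by conditioning on the embedded walk throughout: fix the SRW path, bound the conditional expectation of the exit time by $(\max_z\mathcal{L}^{2d_t}_z)\,S_t$, apply Markov's inequality in the holding-time randomness, and only then take expectation over the SRW, invoking Proposition \ref{prop:localtime} to handle the local-time factor and $\mathcal{A}^h_t$ to handle $S_t d_t$. Note we only need the upper bound on the maximal local time from Proposition \ref{prop:localtime}, not the lower bound.

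The main obstacle is the interplay between the two sources of randomness — the embedded walk and the holding times — and making sure the local-time bound is applied at the right scale. One has to be a little careful that $\mathcal{L}^{2d_t}_z$ genuinely dominates the number of visits before exiting the interval $(Z_t^{(2)}, Z_t^{(1)})$ (it does, since leaving that interval forces $|D_n|$ past $\min(|Z_t^{(1)}|, |Z_t^{(2)}|)$, but certainly before $|D_n|$ reaches $2d_t$), and that the factor $d_t$ produced by Proposition \ref{prop:localtime} is exactly the $d_t$ appearing in $\mathcal{A}^h_t$. Beyond that the argument is routine: the exponential tail of the holding times means no delicate concentration is needed — a first-moment bound suffices because $h_t \to \infty$ gives us the required decay with room to spare.
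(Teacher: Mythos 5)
Your proof is correct and shares the paper's core idea: bound the pre-hitting time by $(\max_z \mathcal{L}_z)\, S_t$, where $\mathcal{L}_z$ is the local time of the embedded walk before it exits $(Z_t^{(2)}, Z_t^{(1)})$, control the local-time factor via Proposition~\ref{prop:localtime} and the $S_t d_t$ factor via $\mathcal{A}^h_t$. Where you depart from the paper is in the final concentration step. The paper works with the independent-gamma representation $\tau^1_t \stackrel{d}{=} \sum_z \text{Gam}(Q_z, \sigma_z)$, replaces $\max_z Q_z$ by $d_t h_t/2$ (using $h_t \to \infty$ so that the event $\{\max_z Q_z > d_t h_t/2\}$ has vanishing probability with no $\varepsilon$-quantifier), and then must apply a per-site Chebyshev bound plus a union bound: this is forced on it because the expectation of the dominating sum is $S_t d_t h_t/2 < t/2$, which is not $o(t)$, so a plain Markov inequality on the sum would give only a constant bound. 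You instead condition on the skeleton $D$, use $\mathbb{E}[\tau^1_t \mid D] \le (\max_z \mathcal{L}_z)\, S_t$, and apply a conditional Markov inequality -- a purely first-moment argument that avoids Chebyshev and the union bound entirely, which is arguably cleaner.

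One loose end to tighten: the sentence ``with probability tending to $1$ we have $\max_z\mathcal{L}_z \le Cd_t$ for a constant $C$'' is not literally what boundedness in probability gives; you only get $\mathbb{P}(\max_z\mathcal{L}_z > C_\varepsilon d_t) \le \varepsilon$ for each $\varepsilon > 0$, not a probability tending to $1$ at any fixed $C$. Your argument still closes via a standard diagonal over $\varepsilon$, or -- more in the spirit of your write-up -- by taking $C = C_t \to \infty$ with $C_t = o(h_t)$ (for instance $C_t = \sqrt{h_t}$), which makes both terms in your final display go to zero simultaneously. The paper cannot do this because its choice $C_t = h_t/2$ overshoots the slack, which is exactly what necessitates its Chebyshev step.
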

\begin{proof}
Let $Q_z$ denote the discrete local time at $z$ of the geometric path induced by $\{X_s: s < \tau_t^1 \}$, and define 
\[ \bar \Gamma_t := \{ z \in  \mathbb{Z} : Z_t^{(2)} < z < Z_t^{(1)} \} . \] 
Considering $\tau^1_t$ as the sum of waiting times along the geometric path induced by $\{X_s : s< \tau_t^1\}$, we have that 
\[  \tau^1_t \stackrel{d}{=}   \sum_{z \in \bar \Gamma_t} \text{Gam} \left(  Q_z , \sigma_z \right) \prec \sum_{z \in \bar \Gamma_t} \text{Gam} \left( \max_z Q_z , \sigma_z \right), \]
  where $\prec$ denotes stochastic domination and each $\text{Gam}(n, \mu)$ is an independent gamma random variable with mean $n \mu$ and variance $n \mu^2$. Remark that, by the definition of $d_t$,
\begin{align}
\label{eq:loc}
  \frac{ \max_z Q_z }{d_t} \prec \frac{ \max_z L_z^{d_t} }{d_t} ,
  \end{align}
and recall that, by Proposition \ref{prop:localtime}, the right hand side of \eqref{eq:loc} is bounded above in probability.  Since $h_t \to \infty$,  this implies that, as $t \to \infty$,
\begin{align}
\label{eq:fourth}
P_\sigma \left( \tau_t^1 < \sum_{z \in \bar{\Gamma}_t} \text{Gam}(d_t h_t /2 , \sigma_z)  \right) \to 1.
\end{align}
Note that the factor of a half in the above equation is included purely for convenience in what follows. By Chebyshev's inequality,
\[ \PP \left( \text{Gam}(n, \mu) \ge 2 n \mu \right) \le n^{-1}, \]
and so, using the fact that
\[ \mathbb{P}\left(\sum_i Y_i \ge \sum_i y_i \right) \le \sum_i \mathbb{P} \left(Y_i \ge y_i \right) \]
for an arbitrary collection of random variables $\{Y_i\}_{i \in \mathbb{N}}$ and real numbers $\{y_i\}_{i \in \mathbb{N}}$, and also the fact that $|\bar \Gamma_t| < 2 d_t$ by definition, we have
\begin{align}
\label{eq:fifth}
P_\sigma \bigg( \sum_{z \in \bar \Gamma_t}   \text{Gam}( d_t h_t / 2, \sigma_z)  \ge  S_t d_t h_t \bigg)  &\le  \sum_{z \in \bar \Gamma_t} P_\sigma \bigg(   \text{Gam}( d_t h_t / 2, \sigma_z)  \ge  \sigma_z d_t h_t   \bigg)  \\
& \nonumber \le \frac{2 |\bar \Gamma_t|}{d_t h_t} < \frac{4 }{h_t} \to 0 \quad \text{as } t \to \infty. 
\end{align}
Since $S_t d_t h_t < t$ on $\mathcal{A}^h_t$, combining equations \eqref{eq:fourth} and \eqref{eq:fifth} yields the result.
\end{proof}

\noindent \textbf{Step 2: Confining to a narrow region.} Define the random site $\bar y  := X_{\tau_t^1} \in \Gamma_t$, a narrow region around $\bar y$ 
\[I^{\bar y}_t := \left\{ z \in \mathbb{Z} : |z - \bar y| <  r_t/h_t  \right\} , \]
and a second, strictly-later random time 
\[ \tau^2_t := \inf \{s>\tau^1_t : X_s \notin I_t^{\bar y} \}. \] 

\begin{proposition}
\label{prop:tau2}
Assume $\mathcal{B}_t^h$ holds. As $t \to \infty$,
\[ P_\sigma ( \tau^2_t > t ) \to 1. \]
\end{proposition}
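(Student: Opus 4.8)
The plan is to show that, starting from $\bar y \in \Gamma_t$ at time $\tau_t^1$, the BTM is overwhelmingly unlikely to travel a distance $r_t/h_t$ before time $t$. The strategy mirrors Step 1: express the exit time $\tau_t^2 - \tau_t^1$ as a sum of waiting times along the geometric path of the embedded random walk, and bound this sum below. First I would observe that, by the strong Markov property at $\tau_t^1$, the path $\{X_s : \tau_t^1 \le s < \tau_t^2\}$ is driven by an SRW based at $\bar y$ run until it exits $I_t^{\bar y}$, i.e.\ until it reaches distance $r_t/h_t$ from $\bar y$; write $Q_z'$ for the discrete local time of this walk at site $z$. Then
\[ \tau_t^2 - \tau_t^1 \stackrel{d}{=} \sum_{z : |z - \bar y| < r_t / h_t} \mathrm{Gam}(Q_z', \sigma_z) \succeq \mathrm{Gam}(Q_{\bar y}', \sigma_{\bar y}), \]
so it suffices to show the single term at $\bar y$ already exceeds $t$ with high probability. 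Since $\bar y \in \Gamma_t$ we have $\sigma_{\bar y} \ge m_t$, and on $\mathcal{B}_t^h$ this gives $\sigma_{\bar y} > t h_t^2 / r_t$.

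Next I would lower-bound the local time $Q_{\bar y}'$. By Proposition \ref{prop:localtime} applied with $l = r_t/h_t$, the quantity $h_t r_t^{-1} \mathcal{L}_0^{r_t/h_t}$ is bounded \emph{away from zero} in probability; hence $Q_{\bar y}'$ is at least of order $r_t/h_t$ with probability tending to $1$. Consequently $\mathrm{Gam}(Q_{\bar y}', \sigma_{\bar y})$ has mean at least of order $(r_t/h_t)(t h_t^2/r_t) = t h_t \to \infty$ times $t$; more carefully, combining the lower bound on $Q_{\bar y}'$ with $\sigma_{\bar y} > t h_t^2/r_t$ and a one-sided concentration bound for the gamma distribution (Chebyshev, since the coefficient of variation of $\mathrm{Gam}(n,\mu)$ is $n^{-1/2} \to 0$), I would conclude that
\[ P_\sigma\!\left( \mathrm{Gam}(Q_{\bar y}', \sigma_{\bar y}) > t \right) \to 1 \]
as $t \to \infty$, which yields $P_\sigma(\tau_t^2 > t) \to 1$. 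One must take a little care that all these statements are being made $P_\sigma$-almost surely for trapping landscapes in $\mathcal{B}_t^h$, and that the randomness in $Q_{\bar y}'$ (which comes from the walk, not $\sigma$) is what Proposition \ref{prop:localtime} controls; since $\bar y$ is the \emph{starting} point of the post-$\tau_t^1$ walk, the relevant local time is exactly $\mathcal{L}_0^{r_t/h_t}$ in the notation of that proposition, so there is no mismatch.

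The main obstacle is making the lower bound on $Q_{\bar y}'$ rigorous and uniform enough: Proposition \ref{prop:localtime} gives that $\mathcal{L}_0^l / l$ is bounded away from zero in probability as $l \to \infty$, i.e.\ for any $\varepsilon > 0$ there is $\delta > 0$ with $\PP(\mathcal{L}_0^l \ge \delta l) \ge 1 - \varepsilon$ for all large $l$; I would fix such a $\delta$, work on the event $\{Q_{\bar y}' \ge \delta r_t/h_t\}$, and on this event invoke the Chebyshev bound $\PP(\mathrm{Gam}(n,\mu) \le t) \le \PP(|\mathrm{Gam}(n,\mu) - n\mu| \ge n\mu - t) \le n\mu^2 / (n\mu - t)^2 \to 0$ whenever $n\mu/t \to \infty$ and $n \to \infty$, which holds here since $n\mu \ge (\delta r_t/h_t)(t h_t^2/r_t) = \delta t h_t$ and $n \ge \delta r_t/h_t \to \infty$ (using $h_t^2 = o(r_t)$). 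Combining the two high-probability events and letting $\varepsilon \to 0$ completes the argument. A secondary point worth a sentence is that we may freely discard the other summands $\mathrm{Gam}(Q_z', \sigma_z)$, $z \ne \bar y$, since they are non-negative; the single deep trap at $\bar y$ carries the entire estimate, which is precisely the heuristic that the deepest visited trap dominates.
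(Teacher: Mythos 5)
Your proof is correct and takes essentially the same approach as the paper: both bound $\tau_t^2 - \tau_t^1$ below by a single gamma term at $\bar y$, use Proposition \ref{prop:localtime} to lower-bound the local time by a constant multiple of $r_t/h_t$, invoke $\mathcal{B}_t^h$ for the bound $\sigma_{\bar y} \ge m_t > th_t^2/r_t$, and finish with a Chebyshev concentration estimate for the gamma distribution. The paper packages the "bounded away from zero in probability" step slightly differently (it replaces $\delta$ by the $t$-dependent quantity $2/h_t$, which is eventually smaller than any fixed $\delta$), but the substance is identical.
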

\begin{proof}
Consider that 
\[ P_\sigma ( \tau^2_t > t ) \ge  P_\sigma ( \tau^2_t - \tau^1_t > t ),\]
so it is sufficient to prove that the latter probability converges to one. Let $Q_0$ denote the discrete local time at $\bar y$ of the geometric path induced by $\{X_s: \tau^t_1 \le s < \tau^t_2\}$. Following the same reasoning as in the proof of Proposition \ref{prop:tau1}, we have that
\[ \tau^2_t - \tau^1_t \stackrel{d}{=} \sum_{z \in I^{\bar y}_t} \text{Gam}(Q_z, \sigma_z) \succ \text{Gam}(Q_0, \sigma_{\bar y} ) \succ \text{Gam}(Q_0, m_t). \]
By Proposition \ref{prop:localtime}, as $t \to \infty$,
$$ \frac{ Q_0 }{ r_t / h_t} \stackrel{d}{=} \frac{ L_0^{r_t/h_t} }{ r_t / h_t }  $$
is bounded away from zero in probability, and so eventually
\begin{align}
\label{eq:sixth}
P_\sigma \left( \tau^2_t - \tau^1_t > \text{Gam} \left( \frac{2 r_t}{h^2_t}, m_t\right)   \right)  \to 1 .
\end{align}
Note that the factor of two here is again included for convenience in what follows. By Chebyshev's inequality,
\[ \PP \left( \text{Gam}(n, \mu) \le  n \mu / 2 \right) \le 4 n^{-1}, \]
and so, using the fact that $h_t^2 = o(r_t)$ as $t \to \infty$,
\begin{align}
\label{eq:seventh}
P_\sigma \left(\text{Gam} \left( \frac{2r_t}{h^2_t} , m_t \right) < \frac{ r_t m_t}{ h^2_t}   \right) < 2 h^2_t / r_t \to 0  \quad \text{as } t \to \infty.
\end{align}
Since $r_t m_t /h^2_t > t$ on $\mathcal{B}^h_t$, the result follows from combining \eqref{eq:sixth} and \eqref{eq:seventh}. 
\end{proof}

\noindent \textbf{Step 3: Two-site localisation.} Introduce a new random process $\{\hat{X}^t_s\}_{s \ge 0}$ on the same probability space as $\{X_s\}_{s \ge 0}$ which is: (i) coupled to $\{X_s\}_{s \ge 0}$ until time $\tau^1_t$; and (ii) thereafter evolves as the BTM on $I^{\bar y}_t$ with periodic boundary conditions. In other words,  $\{\hat{X}^t_{\tau^1_t + s} \}_{s \ge 0}$ is a continuous-time Markov chain on $I^{\bar y}_t$, based at $\bar y := X_{\tau^1_t}$ by definition, with transition rates 
\begin{align*} 
w_{z \to y} := \begin{cases}
\frac{1}{2 \sigma_z}, \,  & \text{if } z \stackrel{\ast}{\sim} y, \\
0, & \text{otherwise} ,
\end{cases}
\end{align*}
where $z \stackrel{\ast}{\sim} y$ denotes that $z$ and $y$ are either neighbours in $I^{\bar y}_t$ \textit{or} that $z$ and $y$ are the two end points of $I^{\bar y}_t$. 

\begin{proposition}
\label{prop:uhat}
Assume $\mathcal{C}_t^h$ holds. As $t \to \infty$,
\[ P_\sigma ( \hat{X}_t^t = \bar y \, | \, \tau^1_t \le t ) \to 1 .\]
\end{proposition}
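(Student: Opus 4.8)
The plan is to read the result off the monotone convergence to equilibrium in Proposition~\ref{prop:MCconv}, once we check that the event $\mathcal{C}_t^h$ forces the equilibrium weight of $\bar y$ in $I^{\bar y}_t$ to be close to $1$. First I would note that $\tau^1_t$ is a stopping time for $\{X_s\}$ and that, conditionally on $\mathcal{F}_{\tau^1_t}$, the process $\{\hat{X}^t_{\tau^1_t + s}\}_{s \ge 0}$ is by construction the BTM on the finite cycle $I^{\bar y}_t$ started from $\bar y = X_{\tau^1_t}$. This chain is irreducible, finite-state, time-homogeneous and continuous-time, and its rates satisfy detailed balance with respect to $\pi_t(z) := \sigma_z / \sum_{w \in I^{\bar y}_t} \sigma_w$, since $\sigma_z (2\sigma_z)^{-1} = \sigma_y (2\sigma_y)^{-1}$ whenever $z \stackrel{\ast}{\sim} y$. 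Hence Proposition~\ref{prop:MCconv} applies with distinguished state $\bar y$, so the map $s \mapsto P_\sigma(\hat{X}^t_{\tau^1_t + s} = \bar y \mid \mathcal{F}_{\tau^1_t})$ is non-increasing in $s$ and bounded below by $\pi_t(\bar y)$ for all $s \ge 0$.

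Next I would bound $\pi_t(\bar y)$ from below using $\mathcal{C}_t^h$. Since $\bar y \in \Gamma_t$ we have $\sigma_{\bar y} > \ell_t$ by definition of $\Gamma_t$, while
\[ \sum_{w \in I^{\bar y}_t \setminus \{\bar y\}} \sigma_w = \sum_{1 \le |z - \bar y| < r_t/h_t} \sigma_z \]
is exactly one of the two inner sums making up $\bar S_t$, hence at most $\bar S_t < \ell_t/h_t$ on $\mathcal{C}_t^h$ (this also shows $I^{\bar y}_t$ contains no point of $\Gamma_t$ other than $\bar y$, although that is not needed). Using that $x \mapsto x/(x + c)$ is increasing for $c > 0$,
\[ \pi_t(\bar y) = \frac{\sigma_{\bar y}}{\sigma_{\bar y} + \sum_{w \in I^{\bar y}_t \setminus \{\bar y\}} \sigma_w} \ge \frac{\ell_t}{\ell_t + \ell_t/h_t} = \frac{h_t}{h_t + 1}, \]
and this lower bound holds whichever of $Z_t^{(1)}, Z_t^{(2)}$ the random site $\bar y$ turns out to be.

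Finally I would use monotonicity to absorb the random delay. On $\{\tau^1_t \le t\}$ we have $t - \tau^1_t \ge 0$, so the strong Markov property at $\tau^1_t$ together with the two preceding displays gives
\[ P_\sigma(\hat{X}^t_t = \bar y \mid \mathcal{F}_{\tau^1_t}) \ge \pi_t(\bar y) \ge \frac{h_t}{h_t+1} \qquad \text{on } \{\tau^1_t \le t\}, \]
whence, taking expectations and dividing by $P_\sigma(\tau^1_t \le t)$, $P_\sigma(\hat{X}^t_t = \bar y \mid \tau^1_t \le t) \ge h_t/(h_t+1) \to 1$. There is no serious obstacle once the set-up is in place; the one point deserving care is that the monotone convergence of Proposition~\ref{prop:MCconv} must be invoked \emph{after} conditioning on $\mathcal{F}_{\tau^1_t}$, so that at the fixed time $t$ the confined chain has genuinely been running from $\bar y$ for the nonnegative amount of time $t - \tau^1_t$, and that the resulting lower bound on the equilibrium weight must be uniform over the two candidate identities of $\bar y$ — which is precisely what $\mathcal{C}_t^h$ is designed to guarantee.
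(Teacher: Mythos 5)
Your argument is correct and follows essentially the same route as the paper: invoke the monotone convergence to equilibrium of Proposition~\ref{prop:MCconv} for the confined chain, identify the stationary law as proportional to $\sigma$ on $I^{\bar y}_t$, and use $\mathcal{C}_t^h$ together with $\sigma_{\bar y} > \ell_t$ to force $\pi(\bar y) \to 1$. The only (cosmetic) differences are that you verify detailed balance directly by the identity $\sigma_z(2\sigma_z)^{-1} = \sigma_y(2\sigma_y)^{-1}$ rather than via the paper's discrete-Laplacian argument, and you are somewhat more explicit about conditioning on $\mathcal{F}_{\tau^1_t}$ before applying the monotonicity at the nonnegative elapsed time $t - \tau^1_t$ — a detail the paper treats more tersely.
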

\begin{proof}
Remark first that the BTM defined on any locally-finite graph satisfies the detailed balance condition. Hence we can apply Proposition \ref{prop:MCconv} to the irreducible, finite-state Markov chain $\{\hat{X}_{\tau_t^1 + s}^t \}_{s \ge 0}$. We conclude that, as $s \to \infty$,
\begin{align}
\label{eq:eighth}
P_\sigma( \hat{X}^t_{\tau_t^1 +s } = \bar y) \downarrow \pi(\bar y) \quad \text{monotonically,}
\end{align}
where $\pi$ is the equilibrium distribution of the BTM on $I_t^{\bar y}$ with periodic boundary conditions. We claim that this equilibrium distribution is proportional to the trapping landscape $\sigma$. To see why note that, by the definition of the BTM, $\pi$ satisfies
\[ (\Delta \boldsymbol\sigma ^{-1}) \pi = \Delta (\boldsymbol \sigma^{-1} \pi) = \mathbf{0}, \]
where $\Delta$ is the discrete Laplacian on $I_t^{\bar y}$ with periodic boundary conditions, $\boldsymbol \sigma$ denotes point-wise multiplication by $\sigma$, and $\mathbf{0}$ denotes the zero vector. Since the equilibrium distribution of $\Delta$ is uniform, the vector $\boldsymbol \sigma^{-1} \pi$ is also uniform, and the claim follows. 

As $\pi$ is proportional to $\sigma$, this implies that
\[ \pi(z) = \frac{\sigma_z}{\sigma_{\bar y}} \pi(\bar y) \le \frac{\sigma_z}{\sigma_{\bar y}} \]
for all $z \in I_t^{\bar y}$. Since, on the event $\mathcal{C}^h_t$, 
$$  \sum_{ z \in I^{\bar y}_t \setminus \{\bar y\} } \sigma_z \le \bar S_t  < \frac{\ell_t}{h_t} < \frac{\sigma_{\bar y}}{h_t} = o( \sigma_{\bar y} )  \quad \text{as } t \to \infty,$$
we have that, as $t \to \infty$,
\begin{align}
\label{eq:ninth}
\sum_{z \in  I_t^{\bar y} \setminus \{\bar y \} }  \pi(z) \to 0.
\end{align}
Combining equations \eqref{eq:eighth} and \eqref{eq:ninth} gives the result.
\end{proof}

\begin{completion1*}
We work on the event that each of $\mathcal{A}^h_t$, $\mathcal{B}^h_t$ and $\mathcal{C}^h_t$ holds, which is sufficient by \eqref{eq:assump}. Note that, by the definition of $\{\hat X_s^t\}_{s \ge 0}$,
\[  P_\sigma(\hat X_t^t \, | \, \tau_t^1 \le t < \tau_t^2 ) = P_\sigma(X_t \, | \, \tau_t^1 \le t < \tau_t^2 ) \]
Combining this with Propositions \ref{prop:tau1}-\ref{prop:uhat}, as $t \to \infty$,
\begin{align}
\label{eq:final}
P_\sigma(X_t = \bar y) \to 1, 
\end{align}
and we have the result.
\qed
\end{completion1*} 
\vspace{0.3cm}

\noindent \textbf{Step 4: Completion of the proof of Theorems \ref{thm:main2} and \ref{thm:main3}.}

\begin{completion2*}
Again we work on the event that each of $\mathcal{A}^h_t$, $\mathcal{B}^h_t$ and $\mathcal{C}^h_t$ holds, which is sufficient by \eqref{eq:assump}. Considering the BTM as a time-changed simple discrete-time random walk, it follows from Proposition \ref{prop:hittingprob} that
\[  P_\sigma ( \bar y = Z_t^{(1)} ) = \frac{ |Z_t^{(2)}| }{\sum_{z \in \Gamma_t} |z| }. \]
Combining with equation \eqref{eq:final} completes the proof.\qed
\end{completion2*}

\begin{completion3*}
Each $\sigma_z$ exceeds the level $l_t$ with probability
\[ \PP(\sigma_0 > l_t) = 1/L(l_t) = 1/r_t . \]
Hence for each $x,y > 0$, as $t \to \infty$,
\begin{align*}
\mathbf{P}(Z_t^{(1)} > x r_t , -Z_t^{(2)} > y r_t) = (1 - 1/r_t)^{\floor{x r_t} + \floor{ y r_t}} \sim e^{-x -y} ,
\end{align*}
which proves the result. \qed
\end{completion3*}

\section{The trapping landscape}
\label{sec:trapping}

In this section, we prove that the trapping landscape $\sigma$ is sufficiently inhomogeneous, in the sense that the events $\mathcal{A}^h_t, \mathcal{B}^h_t$ and $\mathcal{C}^h_t$ all hold eventually with overwhelming probability for a suitable choice of the slowly growing scaling function $h_t$. In other words, we prove that \eqref{eq:assump} holds.  This analysis relies crucially on the fundamental property of i.i.d.\ sequences of random variables with a slowly varying tail, namely that the sum is asymptotically dominated by the maximal term. 

\subsection{Specifying the scaling function}
Let us first specify an appropriate choice for $h_t$. The main condition we require is that $h_t \to \infty$ slowly enough so that, as $t \to \infty$,
\begin{align}
\label{eq:h}
 L(\ell_t / h^3_t) > L(\ell_t)(1 - 1/h_t)    \quad \text{and} \quad  L(\ell_t h^3_t) < L(\ell_t)(1 + 1/h_t)  
 \end{align}
eventually, remarking that such a choice is possible by the slow variation property \eqref{eq:sv}. For completeness, we construct an explicit scaling function $h_t$ satisfying \eqref{eq:h}. Define an arbitrary, positive, increasing sequence $c := (c_i)_{i \in \mathbb{N}} \uparrow \infty$, and denote, for each $u > 0$,
\[ f_t(u) := L( \ell_t u) / L( \ell_t) . \]
By the slow variation property \eqref{eq:sv}, for each $u$ we know that $f_t(u) \to 1$ as $t \to \infty$. This means that, for each $i \in \mathbb{N}$, there exists a $t_i > 0$ such that
\[ f_t(c^{-3}_i) > 1 - 1/c_i \quad \text{and} \quad  f_t(c^3_i) < 1 + 1/c_i \quad \text{for all } t \ge t_i . \]
So we can simply define $h_t$, with increments only on the set $\{t_i\}_{i \in \mathbb{N}}$, satisfying $h_{t_i} := c_i$; it is easy to check that $h_t$ satisfies equation \eqref{eq:h}. 

Recall also that we imposed the condition that $h^2_t = o(r_t)$ in Section \ref{sec:two}. To construct a scaling function $h_t$ that satisfies these two conditions simultaneously, simply take the minimum of scaling functions that satisfy each separately.

\subsection{Sequences of slowly varying random variables}
\label{sec:seq}
We begin our analysis of the trapping landscape by stating general properties of sequences of i.i.d.\ random variables with common distribution $\sigma_0$; let $Y := \{Y_n \}_{n \in \mathbb{N} }$ be such a sequence. Further, let $M := (M_n)_{n \ge 0}$ and $S := (S_n)_{n \ge 0}$ be,  respectively, the extremal and sum processes for the sequence $Y$, i.e.\
\[ M_n := \max \{ Y_i :  i \le \floor{n}  \} \quad \text{and} \quad  S_n := \sum_{i \le \floor{n}} Y_i . \]
The key to our analysis of the trapping landscape is the fact that the extremal and sum processes for $Y$ have scaling limits that coincide.

\begin{proposition}[Functional limit theorems for the extremal and sum process; see {\cite{Kasahara86, Lamperti64}}]
\label{prop:j1conv} As $n \to \infty$,
\begin{align}
\label{eq:j1}
 \left(\frac{1}{n} L (S_{n t} ) \right)_{t\geq 0} \stackrel{J_1}{\Rightarrow} \left(m_t \right)_{t\geq 0} \quad \text{and} \quad   \left(\frac{1}{n} L (M_{n t} ) \right)_{t\ge 0} \stackrel{J_1}{\Rightarrow} \left(m_t \right)_{t\ge 0},
 \end{align}
where $m := (m_t)_{t \ge 0}$ denotes the extremal process 
\[ m_t := \max \{ v_i : 0 \le x_i \le t\} \]
for the set $\mathcal{T} := (x_i, v_i)_{i \in \mathbb{N}}$, an inhomogeneous Poisson point process on $\mathbb{R}^+ \times \mathbb{R}^+$ with intensity measure $x^{-2} d x \, d v$, and $\stackrel{J_1}{\Rightarrow}$ denotes weak convergence in the Skorokhod space $D(\mathbb{R}^+)$ equipped with the $J_1$ topology; see \cite{Whitt02} for a description. Further, the convergence in equation \eqref{eq:j1} occurs jointly, in the sense that
\begin{align}
\label{eq:joint}
\left(\frac{1}{n} L(S_{nt}) - \frac{1}{n} L (M_{n t} ) \right)_{t\ge 0} \stackrel{J_1}{\Rightarrow} \left(0 \right)_{t\ge 0}  .
\end{align} 
\end{proposition}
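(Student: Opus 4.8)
The plan is to derive both functional limits, and their joint version, from a single point‑process convergence together with one comparison estimate that exploits the slowly varying tail. Throughout write $L^\leftarrow(y):=\inf\{x\ge0:L(x)\ge y\}$, and recall three standard consequences of slow variation: $L(L^\leftarrow(y))/y\to1$ as $y\to\infty$; $L(\lambda x)/L(x)\to1$ as $x\to\infty$ uniformly for $\lambda$ in compact subsets of $(0,\infty)$ (the uniform convergence theorem, with the attendant Potter bounds); and $\EE[\sigma_0\wedge u]=\int_0^u\PP(\sigma_0>s)\,ds\sim u/L(u)$ as $u\to\infty$ (Karamata's theorem for the slowly varying function $1/L$).

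First I would establish the point‑process convergence. On $\mathbb{R}^+\times(0,\infty]$ consider the point measures $N_n:=\sum_{i\ge1}\delta_{(i/n,\,L(Y_i)/n)}$. For fixed $v,t>0$ the expected number of points of $N_n$ in $[0,t]\times(v,\infty]$ equals $\floor{nt}\,\PP(L(Y_1)>nv)$, which tends to $t/v$ by the facts just recalled, while $\PP(L(Y_1)>nv)\to0$; hence $\{(i/n,L(Y_i)/n)\}_{i\le nT}$ is a null array for each $T$, and the Poisson limit theorem for point processes gives $N_n\Rightarrow N$, a Poisson point process whose associated extremal process $t\mapsto\sup\{v:(s,v)\in N,\,s\le t\}$ is precisely the process $m$ of the statement (so that $\PP(m_t\le u)=e^{-t/u}$). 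Since $L$ is non‑decreasing, the extremal functional $\Phi$, sending a point measure $\eta$ to the path $t\mapsto\sup\{v:(s,v)\in\eta,\,s\le t\}$, satisfies $\Phi(N_n)(t)=\frac1n L(M_{nt})$; and $\Phi$ is a.s.\ continuous at $N$ (which a.s.\ has no two points sharing a time coordinate) as a map into $D(\mathbb{R}^+)$ with the $J_1$ topology. The continuous mapping theorem therefore yields the second convergence in \eqref{eq:j1}, which is essentially the content of \cite{Lamperti64}.

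It remains to prove the joint statement \eqref{eq:joint}. Since $M_{nt}\le S_{nt}$ and $L$ is non‑decreasing, $D_n(t):=\frac1n L(S_{nt})-\frac1n L(M_{nt})\ge0$, so it suffices to show $\sup_{t\le T}D_n(t)\to0$ in probability for each fixed $T$. Fix $\varepsilon>0$ and, among the first $\floor{nT}$ terms, call $Y_i$ \emph{large} if $Y_i>L^\leftarrow(\varepsilon n)$ and \emph{small} otherwise. Elementary first‑ and second‑moment estimates based on the Karamata bounds show that the number of large terms and the sum of the small terms divided by $L^\leftarrow(\varepsilon n)$ are both bounded in probability (the former even converges in law to a Poisson variable of mean $\sim T/\varepsilon$). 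Hence for each $\eta>0$ there is, for all large $n$, an event of probability at least $1-\eta$ on which, for every $t\le T$: either a large term has occurred among the first $\floor{nt}$ indices, so that $M_{nt}>L^\leftarrow(\varepsilon n)$ and therefore $S_{nt}\le K M_{nt}$ for some constant $K=K(\varepsilon,T,\eta)$ independent of $n$, whence
\[ D_n(t)\ \le\ \frac1n L(M_{nt})\left(\frac{L(K M_{nt})}{L(M_{nt})}-1\right)\ \le\ \frac1n L(M_{nT})\,\sup_{x\ge L^\leftarrow(\varepsilon n)}\left|\frac{L(Kx)}{L(x)}-1\right|, \]
which tends to $0$ in probability because $\frac1n L(M_{nT})$ is tight (it converges to $m_T$) while $L^\leftarrow(\varepsilon n)\to\infty$ and $L$ is slowly varying; or else no large term has yet occurred, so $M_{nt}\le L^\leftarrow(\varepsilon n)$ and $S_{nt}$ is at most a constant multiple of $L^\leftarrow(\varepsilon n)$, giving $D_n(t)\le\frac1n L(S_{nt})=\varepsilon+o(1)$ by slow variation and $L(L^\leftarrow(\varepsilon n))\sim\varepsilon n$. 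Combining the two cases, $\limsup_n\PP(\sup_{t\le T}D_n(t)>\varepsilon+\delta)\le\eta$ for all $\delta,\eta>0$; letting $\eta,\delta,\varepsilon\downarrow0$ gives \eqref{eq:joint}. The first convergence in \eqref{eq:j1} then follows from the second together with \eqref{eq:joint} by a standard converging‑together argument in $D(\mathbb{R}^+)$, legitimate because the limit of the difference is the continuous zero path.

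The main obstacle is this last estimate, the control of $L(S_{nt})-L(M_{nt})$ \emph{uniformly in $t$}. Pointwise in $t$ it is just the classical fact that a sum of i.i.d.\ random variables with a slowly varying tail is asymptotically dominated by its maximum; but the extremal process $m$ genuinely has jumps, so no positive amount can be lost at any single time. The truncation at level $L^\leftarrow(\varepsilon n)$, tuned to the free parameter $\varepsilon$, is what forces the sum‑minus‑max remainder to be uniformly $O(\varepsilon)$ once the map $L(\cdot)/n$ is applied, and the uniform convergence theorem for slowly varying functions is what converts the crude comparison $S_{nt}\le K M_{nt}$ into a vanishing contribution. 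This is, in essence, the argument underlying the sum‑process limit theorem of \cite{Kasahara86}.
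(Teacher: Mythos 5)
Your proof is correct, but it follows a route noticeably different from the paper's. For the second convergence in \eqref{eq:j1} the paper simply invokes Lamperti's extremal functional limit theorem, and for the first it cites Kasahara's theorem directly; you instead prove only the $M$-convergence (via point-process convergence and the continuous-mapping theorem applied to the extremal functional) and recover the $S$-convergence afterwards from the joint statement. This makes your argument more self-contained, at the modest cost of re-deriving material the paper is content to cite. The more substantive divergence is in the joint statement \eqref{eq:joint}. The paper splits the proof into two pieces: Lemma~\ref{lem:mono} (a general ``monotone sandwiching implies the difference is $o_{\PP}(1)$'' lemma) handles the finite-dimensional distributions, and tightness is then obtained by a jump-matching argument, using Lemma~\ref{lem:tight}, the inequalities $M_{nt}\ge S_{nt}-S_{nt^-}$ and $S_{nt^-}\ge M_{nt^-}$, and the pure-jump structure of the limit $m$, to show that every non-negligible jump of $n^{-1}L(S_{nt})$ is eventually mirrored by one of $n^{-1}L(M_{nt})$ (and vice versa by contradiction). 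Your truncation at level $L^{\leftarrow}(\varepsilon n)$ avoids this two-stage structure entirely: by splitting the first $\lfloor nT\rfloor$ terms into a tight number of large terms and a sum of small terms that is $O_{\PP}(L^{\leftarrow}(\varepsilon n))$, you obtain a uniform-in-$t$ bound $\sup_{t\le T}D_n(t)\le\varepsilon+o_{\PP}(1)$ in one shot, which gives $J_1$-convergence to the zero path directly. This is a genuinely different mechanism: the paper's argument leans on the $J_1$ structure of the two marginal limits and matches their jumps, whereas yours is a bare-hands slow-variation estimate that never inspects the jump structure of the limit. Both are valid; the paper's is shorter given the cited machinery, while yours makes the ``sum asymptotically dominated by the maximum'' phenomenon directly visible and is arguably easier to verify from first principles.
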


The first statement in equation \eqref{eq:j1} is the main result of \cite{Kasahara86}; the second statement may be derived by applying \cite[Theorems 2.1, 3.2]{Lamperti64} to the sequence $Y$ (see \cite[Proposition 2.2]{Croydon14} for details). To establish the joint convergence in equation \eqref{eq:joint} we shall need the following two additional lemmas.

\begin{lemma}[Monotonicity implies joint convergence]
\label{lem:mono}
Let $\{X_n\}_{n \in \mathbb{N}}$ and $\{Y_n\}_{n \in \mathbb{N}}$ be sequences of random variable such that, as $n \to \infty$,
\[ X_n \Rightarrow Z \quad \text{and} \quad Y_n \Rightarrow Z  \quad \text{in law}\]
for some limiting random variable $Z$. Assume further that $X_n \ge Y_n$ for each $n$. Then, as $n \to \infty$,
\[ X_n - Y_n \Rightarrow 0 \quad \text{in law}.\]
\end{lemma}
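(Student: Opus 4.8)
The plan is to reduce the statement to convergence in probability and then exploit the ordering $Y_n \le X_n$ through a fine partition of the real line. Since $X_n - Y_n \ge 0$ for every $n$, the conclusion $X_n - Y_n \Rightarrow 0$ is equivalent to $X_n - Y_n \to 0$ in probability, i.e.\ to showing that $\PP(X_n - Y_n > \varepsilon) \to 0$ for each fixed $\varepsilon > 0$. Write $F$ for the distribution function of the common limit $Z$.

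Fix $\varepsilon > 0$ and $\delta > 0$. The main device is to pick continuity points $a_1 < a_2 < \dots < a_K$ of $F$ with $a_{j+1} - a_j < \varepsilon$ for all $j$, with $a_1$ small enough that $F(a_1) < \delta/2$ and $a_K$ large enough that $1 - F(a_K) < \delta/2$; this is possible since $F$ has only countably many discontinuities and $F(a) \to 0$ as $a \to -\infty$, $F(a) \to 1$ as $a \to +\infty$. Using only the inequality $Y_n \le X_n$, I would then check the deterministic inclusion
\[ \{ X_n - Y_n > \varepsilon \} \ \subseteq\ \{ Y_n < a_1 \}\ \cup\ \{ X_n > a_K \}\ \cup\ \bigcup_{j=1}^{K-1} \{ Y_n < a_{j+1} \le X_n \} , \]
the point being that if $a_1 \le Y_n \le X_n \le a_K$ with $X_n - Y_n > \varepsilon$, then $Y_n$ lies in some $[a_j, a_{j+1})$ and hence $X_n \ge Y_n + \varepsilon > a_j + (a_{j+1} - a_j) = a_{j+1}$.

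It then remains to control each term on the right. For a continuity point $a$ of $F$, the monotonicity $Y_n \le X_n$ gives $\PP(Y_n < a \le X_n) = \PP(X_n \ge a) - \PP(Y_n \ge a)$, and since both $X_n$ and $Y_n$ converge in law to $Z$ this tends to $(1 - F(a)) - (1 - F(a)) = 0$; hence each of the $K-1$ middle terms (a finite collection) vanishes in the limit. Likewise $\PP(Y_n < a_1) \le \PP(Y_n \le a_1) \to F(a_1) < \delta/2$ and $\PP(X_n > a_K) = 1 - \PP(X_n \le a_K) \to 1 - F(a_K) < \delta/2$, again using that $a_1, a_K$ are continuity points. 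Combining gives $\limsup_{n \to \infty} \PP(X_n - Y_n > \varepsilon) \le \delta$, and since $\delta > 0$ was arbitrary the left-hand side is $0$, which is the claim.

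I do not expect a serious obstacle: the only point requiring care is that a single threshold does not work, because $Z$ need not concentrate on an interval of length $\varepsilon$, so one must split the line into $\varepsilon$-fine pieces and handle the two semi-infinite tails separately via the tightness of $(X_n)$ and $(Y_n)$, choosing continuity points of $F$ throughout so that all relevant probabilities converge. (An essentially equivalent alternative is to apply $\EE[g(X_n)] - \EE[g(Y_n)] \to 0$ with $g = \arctan$: the difference is nonnegative, hence converges to $0$ in $L^1$ and in probability, and on the tight set $\{|Y_n| \le A\}$ the event $\{X_n - Y_n > \varepsilon\}$ forces $\arctan X_n - \arctan Y_n$ to exceed the positive constant $\min_{|y| \le A}(\arctan(y + \varepsilon) - \arctan y)$.)
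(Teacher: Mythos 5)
Your proof is correct and is essentially the same argument as the paper's: both reduce to showing $\PP(X_n - Y_n > \varepsilon) \to 0$, both partition a bounded interval into pieces of width comparable to $\varepsilon$ and handle the two tails by tightness, and both exploit $Y_n \le X_n$ to control what happens on each piece. The only cosmetic difference is that the paper lower-bounds the ``good'' event $\{|X_n - Y_n| < \varepsilon\}$ by summing $\PP(X_n \in (y_i, y_i+\varepsilon],\, |X_n - Y_n| < \varepsilon) \ge \PP(Y_n > y_i) - \PP(X_n > y_i + \varepsilon)$ over a cover of $(-C,C]$, whereas you upper-bound the ``bad'' event by the union of threshold-separation events $\{Y_n < a_{j+1} \le X_n\}$ together with two tail events; these are complementary bookkeeping of the same estimate, namely that no fixed continuity point of $F$ can asymptotically separate $Y_n$ from $X_n$. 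Your write-up is a touch more careful in explicitly selecting continuity points of $F$, which the paper's final covering step leaves implicit. The parenthetical alternative via $g = \arctan$ (bounded, strictly increasing, uniformly continuous on compacts) is a genuinely different and slicker route to the same conclusion, and it is worth noting that it avoids the partition entirely.
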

\begin{proof}
For each $n \in \mathbb{N}$, $y \in \mathbb{R}$ and $\varepsilon > 0$ we have
\begin{align*} \mathbb{P}(Y_n > y) & = \mathbb{P}(Y_n > y, \, X_n - Y_n \ge \varepsilon) + \mathbb{P}(Y_n > y, \, |X_n - Y_n| < \varepsilon) \\
& \le \mathbb{P}(X_n > y + \varepsilon, \, X_n - Y_n \ge \varepsilon) + \mathbb{P}(X_n > y, \, |X_n - Y_n| < \varepsilon) \\
&= \mathbb{P}(X_n > y + \varepsilon) +  \mathbb{P}(X_n \in (y, y+\varepsilon], \,  |X_n - Y_n| < \varepsilon),
\end{align*}
and so
\[\mathbb{P}( X_n \in (y, y+\varepsilon], \,  |X_n - Y_n| < \varepsilon) \ge  \mathbb{P}(Y_n > y) -  \mathbb{P}(X_n > y + \varepsilon) \stackrel{n \to \infty}{\to} \mathbb{P}(Z \in (y, y + \varepsilon]) .\]
To complete the proof note that, for arbitrary $C > 0$, we can cover $(-C, C]$ with a finite number of disjoint regions $(y_i, y_i + \varepsilon]$. Summing over these, we have that, for each $C > 0$,
\[\liminf_{n \to \infty} \mathbb{P}( |X_n - Y_n| < \varepsilon) \ge \liminf_{n \to \infty} \mathbb{P}( X_n \in (-C, C], \,  |X_n - Y_n| < \varepsilon) \ge   \mathbb{P}(Z \in (-C, C]) .\]
Taking $C \to \infty$ establishes the result.
\end{proof}

\begin{lemma}
\label{lem:tight}
For $\varepsilon > \varepsilon' > 0$ and non-decreasing functions $x_t, y_t \to \infty$, there exists a $t'>0$ such that 
\[  \{ t > t':  L(x_t) >  (1 + \varepsilon) L(y_t)  \} \subseteq \{ t > t' : L(x_t - y_t) > (1 + \varepsilon') L(y_t)  . \]
\end{lemma}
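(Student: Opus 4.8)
The plan is to show that the hypothesis $L(x_t) > (1+\varepsilon) L(y_t)$, with $\varepsilon$ fixed, forces $x_t$ to dwarf $y_t$ for such $t$, so that $x_t - y_t$ is still comparable to $x_t$; a second use of slow variation then transfers the inequality from $L(x_t)$ to $L(x_t - y_t)$ at the price of replacing $\varepsilon$ by $\varepsilon'$. Throughout write $A := \{ t : L(x_t) > (1+\varepsilon) L(y_t) \}$ for the left-hand set, and fix at the outset a $\delta \in (0,1)$ with $(1-\delta)(1+\varepsilon) > 1 + \varepsilon'$, which is possible since $\varepsilon > \varepsilon'$.

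The key step is to prove that $x_t/y_t \to \infty$ as $t \to \infty$ along $t \in A$; concretely, that for each fixed $\lambda > 1$ there is $t_\lambda$ with $x_t > \lambda y_t$ whenever $t \in A$ and $t > t_\lambda$. I would argue by contradiction: if this failed there would be $t_n \to \infty$ in $A$ with $x_{t_n} \le \lambda y_{t_n}$; monotonicity of $L$ then gives $L(x_{t_n})/L(y_{t_n}) \le L(\lambda y_{t_n})/L(y_{t_n})$, whose right-hand side converges to $1$ by slow variation \eqref{eq:sv} (applied at $v = \lambda$, with argument $y_{t_n} \to \infty$), contradicting $L(x_{t_n})/L(y_{t_n}) > 1 + \varepsilon$ for $n$ large.

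Next I would apply this with $\lambda = 2$ to obtain $t_1$ such that $y_t \le x_t/2$, hence $x_t - y_t \ge x_t/2$, for all $t \in A$ with $t > t_1$; and, separately, use slow variation (at $v = 1/2$, with argument $x_t \to \infty$) to get $t_2$ with $L(x_t/2) \ge (1-\delta) L(x_t)$ for all $t > t_2$. Setting $t' := \max\{t_1, t_2\}$, for $t \in A$ with $t > t'$ the monotonicity of $L$ together with the choice of $\delta$ give
\[ L(x_t - y_t) \ge L(x_t/2) \ge (1-\delta) L(x_t) > (1-\delta)(1+\varepsilon) L(y_t) > (1+\varepsilon') L(y_t), \]
which is precisely the claimed containment.

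I do not expect a genuine obstacle: the one point needing care is the key step — the observation that a fixed multiplicative gap between $L(x_t)$ and $L(y_t)$ is incompatible with a bounded ratio $x_t/y_t$ — and after that the proof is a short chain of inequalities relying only on monotonicity and the definition of slow variation (no uniform-convergence theorem is needed, since slow variation is invoked only at the fixed ratios $2$ and $1/2$ along sequences tending to infinity). The sole bit of bookkeeping is that $t_1$ is a threshold valid only for $t \in A$ while $t_2$ is valid for all $t$; taking the maximum reconciles the two.
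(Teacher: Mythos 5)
Your proof is correct and follows essentially the same route as the paper's: both show that the hypothesis together with slow variation forces $x_t > 2y_t$ eventually (the paper argues directly that $(1+\varepsilon)L(y_t) > L(2y_t)$ eventually and invokes monotonicity of $L$, whereas you reach the same conclusion by contradiction along sequences), deduce $x_t - y_t > x_t/2$, and then apply slow variation once more at the fixed ratio $1/2$ to transfer the inequality from $L(x_t)$ to $L(x_t - y_t)$. The phrasing differs, but the two invocations of slow variation and the final choice of the slack parameter are identical to the paper's argument.
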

\begin{proof}
By the slow variation property \eqref{eq:sv}, as $t \to \infty$ eventually
\[ (1 + \varepsilon ) L(y_t)> L(2 y_t)  . \]
Hence if $L(x_t) > (1 + \varepsilon) L(y_t)$, then $x_t > 2y_t$ eventually since $L$ is non-decreasing, and so $x_t - y_t > x_t/2$. This means that
\[ L(x_t - y_t) \ge L(x_t / 2) > (1 - \varepsilon'')L(x_t) \]
eventually for any $\varepsilon'' > 0$, again by the slow variation property \eqref{eq:sv}. The claim then follows by choosing $\varepsilon''$ such that $ (1 - \varepsilon'')(1 + \varepsilon) > (1 + \varepsilon') $.
\end{proof}

We can now establish the joint convergence in equation \eqref{eq:joint}.
\begin{proof}
By applying Lemma \ref{lem:mono} component-wise, the convergence in \eqref{eq:j1} implies that the finite-dimensional distributions of 
\[ \left(\frac{1}{n} L(S_{nt}) - \frac{1}{n} L (M_{n t}) \right)_{t \ge 0} \]
converge in law to the zero random vector; it remains to establish tightness in the topology of uniform convergence on compact sets. Using the criteria of \cite[Proposition VI.3.26]{Jacod87}, we need only check that, for arbitrary $0 < \delta < T$ and $\varepsilon > 0$,
\[ \lim_{C \to \infty} \lim_{n \to \infty} \mathbb{P}\left( \sup_{t \in [\delta, T]} \bigg| \frac{1}{n} L(S_{nt}) - \frac{1}{n} L (M_{n t}) \bigg| < C \right) = 0 \]
and
\[  \lim_{n \to \infty} \mathbb{P} \left(   \sup_{t \in [\delta, T]} \sup_{u,v \in [t, t+\delta] } \bigg| \left( \frac{1}{n} L(S_{nu}) - \frac{1}{n} L (S_{n v}) \right) - \left( \frac{1}{n} L(M_{nu}) - \frac{1}{n} L (M_{n v}) \right)  \bigg|   > \varepsilon \right) =  0\]
both hold. The first criterion is trivially satisfied by the convergence in \eqref{eq:j1}. For the second, since both $(n^{-1} L (S_{n t} ))_{t \ge 0}$ and $(n^{-1} L (M_{n t}))_{t \ge 0}$ converge in the $J_1$ topology to the pure-jump process $m_t$, it is sufficient to show that the (finite) set of non-negligible jumps of
\[ \left( n^{-1} L (S_{n t} ) \right)_{t \in [\delta, T]} \quad \text{and} \quad  \left(n^{-1} L (M_{n t} ) \right)_{t \in [\delta, T]} \]
are eventually matched exactly, i.e.\ that
\[  \lim_{n \to \infty} \mathbb{P} \left(   \sup_{t \in [\delta, T]} \bigg| \left( \frac{1}{n} L(S_{n t}) - \frac{1}{n} L (S_{n t^-}) \right) - \left( \frac{1}{n} L(M_{nt}) - \frac{1}{n} L (M_{n t^-}) \right)  \bigg|   > \varepsilon \right) = 0.\]
 Observe that, by the respective definitions of $M$ and $S$,
\[ M_{n t} \ge S_{n t} - S_{n t^-} \quad \text{and} \quad   S_{n t^-} \ge M_{n t^-}  . \]
Together with Lemma \ref{lem:tight} and the fact that $L$ is non-decreasing, this implies that, for any $\varepsilon > \varepsilon' > 0$, as $n \to \infty$ eventually we have the set inclusion
\[  \left\{ t \in [\delta, T] : \frac{1}{n} L (S_{n t} ) > (1 + \varepsilon)   \frac{1}{n} L (S_{n t^-} )  \right\} \subseteq  \left\{ t \in [\delta, T] : \frac{1}{n} L (M_{n t} ) > (1 + \varepsilon')   \frac{1}{n} L (M_{n t^-} )  \right\}  .\]
Since the jump sizes are bounded in probability, this implies that the non-negligible jumps of $(n^{-1} L (S_{n t} ))_{t \in [\delta, T]}$ are eventually matched exactly by non-negligible jumps of $(n^{-1} L (M_{n t}))_{t \in [\delta, T]}$. To complete the proof, note that if $t'>0$ denotes the time of the first non-negligible jump in $(n^{-1} L (M_{n t}))_{t \in [\delta, T]}$ that is unmatched by a jump in $(n^{-1} L (S_{n t} ))_{t \in [\delta, T]}$, then as $n \to \infty$ we would eventually have $M_{nt'} > S_{nt'}$, which is a contradiction.
\end{proof}

We now extract consequences of these scaling limits. 
For a level $l > 0$, define 
\[ n_l := \min \{n \in \mathbb{N} : M_n > l \}  \quad \text{and} \quad s_l := S_{n_l^-} = \sum_{i < n_l} Y_i  \]
to be respectively the index of the first exceedence of the level $l$ and the sum of all previous terms in the sequence. Further, for any $h > 0$, define
\[ \bar s^h_l :=  \sum_{ n : 1 \le |n - n_l| < L(l) / h }  Y_n . \]
Our aim is to analyse the four random variables $n_{\ell_t}$, $s_{\ell_t}$, $Y_{n_{\ell_t}}$ and $\bar s^{h_t}_{\ell_t}$. 
To assist in this analysis, we first need a preliminary asymptotic for $\ell_t$.

\begin{lemma}[Preliminary asymptotic for $\ell_t$] 
\label{lem:asymptl}
As $t \to \infty$, 
\[ \ell_t  \sim t/r_t  . \]
\end{lemma}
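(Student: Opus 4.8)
The plan is to sandwich the quantity $\ell_t L(\ell_t) = \ell_t r_t$ between $t$ and $t\cdot L(\ell_t)/L(\ell_t^-)$, and then use the slow variation property \eqref{eq:sv} to show that the upper factor converges to $1$.

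First I would extract the two elementary consequences of the definition $\ell_t = \min\{s \ge 0 : s L(s) \ge t\}$. Since $L$ is c\`adl\`ag, non-decreasing and unbounded, the map $g(s):= sL(s)$ is c\`adl\`ag, non-decreasing, vanishes at $0$, and diverges to $\infty$; hence $\{s : g(s)\ge t\}=[\ell_t,\infty)$ with the minimum attained, which gives $\ell_t L(\ell_t)\ge t$, i.e.\ $\ell_t \ge t/r_t$. Conversely, every $s<\ell_t$ satisfies $sL(s)<t$; letting $s\uparrow\ell_t$ (the factor $s$ being continuous, while $L(s)\to L(\ell_t^-)$, the left-hand limit of the monotone function $L$) yields $\ell_t L(\ell_t^-)\le t$. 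Combining the two bounds,
\[ 1 \;\le\; \frac{\ell_t L(\ell_t)}{t} \;\le\; \frac{L(\ell_t)}{L(\ell_t^-)} . \]

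Second, I would show that $L(x^-)/L(x)\to 1$ as $x\to\infty$; this is the only place slow variation is genuinely used. Applying \eqref{eq:sv} with $v=1/2$, for any $\varepsilon>0$ there is $U>0$ such that $L(u/2)\ge(1-\varepsilon)L(u)$ for all $u\ge U$. For $x\ge 2U$ the left-hand limit obeys $L(x^-)\ge L(x/2)\ge(1-\varepsilon)L(x)$, while trivially $L(x^-)\le L(x)$; hence $L(x^-)/L(x)\to 1$. Since $\ell_t\to\infty$ as $t\to\infty$ (as already noted after the definition of $\ell_t$: because $\sigma_0$ has unbounded support, $g$ is finite on compact sets, so $g(\ell_t)\ge t\to\infty$ forces $\ell_t\to\infty$), the right-hand side of the display above equals $L(\ell_t)/L(\ell_t^-)\to 1$, and the squeeze gives $\ell_t L(\ell_t)/t\to 1$, that is $\ell_t\sim t/r_t$.

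The main obstacle — such as it is — is the possibility that $L$ has a genuine multiplicative jump at $\ell_t$ that stays bounded away from $1$: this would break the claimed asymptotic, since then $t$ could fall strictly inside the jump of $g$. The content of the second step is precisely that this cannot happen in the limit, because a non-decreasing slowly varying function cannot sustain multiplicative jumps bounded away from $1$ far out at infinity. Everything else is bookkeeping with the definition of $\ell_t$ and the elementary inequalities above.
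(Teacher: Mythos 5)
Your proof is correct and follows essentially the same route as the paper: sandwich $t$ between $\ell_t L(\ell_t^-)$ and $\ell_t L(\ell_t)$, then use slow variation to conclude $L(\ell_t^-) \sim L(\ell_t)$. You have simply spelled out the derivation of $L(x^-)/L(x) \to 1$ from \eqref{eq:sv} (via $v = 1/2$) more explicitly than the paper does.
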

\begin{proof}
Recall that $\ell_t := \min \{s : s L(s) \ge t \}$ and so
\begin{align}
\label{eq:pre1}
\ell_t L(\ell_t^- ) \le t \le \ell_t L(\ell_t)  .
\end{align}
On the other hand, by the slow variation assumption \eqref{eq:sv}, as $u \to \infty$,
\[ L(u^-) \sim L(u) \]
which, combining with equation \eqref{eq:pre1}, gives the result.
\end{proof}

\begin{proposition}[Asymptotic law of the index of first exceedence]
\label{prop:dist}
As $l \to \infty$,
\[ \frac{ n_l }{ L(l) } \Rightarrow \mathcal{E}  \quad \text{in law},  \]
where $\mathcal{E}$ is an exponential random variable with unit mean.
\end{proposition}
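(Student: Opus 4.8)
The plan is to recognise that $n_l$ is simply a geometric random variable and then pass to the exponential limit in the usual way. Since $Y = \{Y_n\}_{n \in \mathbb{N}}$ is an i.i.d.\ sequence with common law that of $\sigma_0$, and by the definition of $L$ each term satisfies $\mathbf{P}(\sigma_0 > l) = 1/L(l)$, the events $\{Y_n > l\}_{n \in \mathbb{N}}$ are independent and each has probability $p_l := 1/L(l)$. By definition $n_l$ is the index of the first of these that occurs, so $n_l$ is geometric with parameter $p_l$; in particular
\[ \mathbf{P}(n_l > k) = (1 - p_l)^k = \left(1 - \tfrac{1}{L(l)}\right)^{k} \]
for every integer $k \ge 0$.

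Next I would fix $x > 0$ and, using that $n_l$ is integer-valued, compute
\[ \mathbf{P}\!\left( \frac{n_l}{L(l)} > x \right) = \mathbf{P}\big( n_l > \floor{x L(l)} \big) = \left( 1 - \frac{1}{L(l)} \right)^{\floor{x L(l)}}. \]
Since $L$ is non-decreasing and unbounded, $L(l) \to \infty$ as $l \to \infty$, and hence $\floor{x L(l)} \log(1 - 1/L(l)) = -x + o(1)$, so the right-hand side converges to $e^{-x}$. As $x > 0$ was arbitrary, the tail of $n_l / L(l)$ converges pointwise to the tail of the unit-mean exponential distribution, which establishes the claimed convergence in law. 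This is essentially the same elementary computation as in the proof of Theorem~\ref{thm:main3}.

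There is no substantive obstacle here; the only minor points requiring care are that $\mathbb{N}$ is taken to begin at $1$, so that $n_l \ge 1$ (harmless, as it shifts the geometric tail only by a bounded amount), and that the floor appearing in the exponent may be replaced by $x L(l) + O(1)$ without affecting the limit, again because $L(l) \to \infty$. One could equally phrase the argument via convergence of distribution functions at continuity points or of Laplace transforms, but the direct tail computation above is the most transparent.
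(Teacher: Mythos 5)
Your argument is the same as the paper's: observe that $n_l$ is geometric with success probability $1/L(l)$, compute $\mathbf{P}(n_l > \floor{xL(l)}) = (1-1/L(l))^{\floor{xL(l)}}$, and let $l \to \infty$ to obtain $e^{-x}$. The extra care you take with the floor and with $n_l \ge 1$ is correct but not a substantive difference.
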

\begin{proof} 
Each $Y_i$ exceeds the level $l$ with probability
\[ \PP(\sigma_0 > l) = 1/L(l) . \]
Hence, for each $x > 0$, as $l \to \infty$,
\begin{gather*} 
\PP( n_l > x L(l)) =  (1 - 1/L(l))^{\floor{x L(l)}} \sim e^{-x}  . \qedhere 
\end{gather*}
\end{proof}

\begin{proposition}[Upper bound on sum prior to first exceedence]
\label{prop:sum}
As $t \to \infty$,
\[ \PP \left( s_{\ell_t} < \frac{t}{2 r_t h^2_t} \right) \to 1 . \]
\end{proposition}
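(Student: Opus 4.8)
The plan is to obtain the bound by an elementary first-moment argument, after first replacing the level $\ell_t$ by the slightly smaller level $\ell_t/h^3_t$, at which the slow-variation estimate \eqref{eq:h} is available. It suffices to show that $\PP(s_{\ell_t} \ge t/(2 r_t h^2_t)) \to 0$. I would begin by noting that $s_{\ell_t} = s_{\ell_t/h^3_t}$ with overwhelming probability: indeed $n_{\ell_t/h^3_t} \le n_{\ell_t}$ always, and these indices differ exactly when the first term $Y_{n_{\ell_t/h^3_t}}$ to exceed $\ell_t/h^3_t$ fails to exceed $\ell_t$; since this term has the conditional law of $Y_1$ given $Y_1 > \ell_t/h^3_t$, the event has probability $1 - L(\ell_t/h^3_t)/L(\ell_t) < 1/h_t \to 0$ by the first bound in \eqref{eq:h}. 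Hence it is enough to control $s_{\ell_t/h^3_t}$.

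By minimality of $n_{\ell_t/h^3_t}$, every term of $s_{\ell_t/h^3_t} = \sum_{i < n_{\ell_t/h^3_t}} Y_i$ is at most $\ell_t/h^3_t$, so $s_{\ell_t/h^3_t} \le \sum_{i=1}^{n_{\ell_t/h^3_t}} (Y_i \wedge \ell_t/h^3_t)$. Since $n_{\ell_t/h^3_t}$ is a stopping time for the sequence $Y$ with mean $L(\ell_t/h^3_t)$, Wald's identity gives
\[ \EE\big[ s_{\ell_t/h^3_t} \big] \le L(\ell_t/h^3_t)\, \EE\big[ Y_1 \wedge (\ell_t/h^3_t) \big] = L(\ell_t/h^3_t) \int_0^{\ell_t/h^3_t} \frac{ds}{L(s)} . \]
By Karamata's theorem for the slowly varying function $1/L$ one has $\int_0^x ds/L(s) \sim x/L(x)$ as $x \to \infty$; moreover $\ell_t/h^3_t \to \infty$, because $h_t$ grows more slowly than any positive power of $\ell_t$ (indeed $h^2_t = o(r_t)$, and $r_t = o(\ell^\delta_t)$ for every $\delta > 0$ since $L$ is slowly varying). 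Hence $\EE[s_{\ell_t/h^3_t}] \le (1 + o(1))\, \ell_t/h^3_t$.

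Finally, Markov's inequality together with Lemma \ref{lem:asymptl} (which gives $\ell_t \sim t/r_t$) yields
\[ \PP\bigg( s_{\ell_t/h^3_t} \ge \frac{t}{2 r_t h^2_t} \bigg) \;\le\; \frac{2 r_t h^2_t}{t}\, \EE\big[ s_{\ell_t/h^3_t} \big] \;\le\; (1 + o(1))\,\frac{2 r_t \ell_t}{h_t\, t} \;\sim\; \frac{2}{h_t} \;\to\; 0 , \]
and combining this with the coupling estimate of the first step gives the claim. The only step requiring any thought is the choice of the intermediate level $\ell_t/h^3_t$: the extra factor $h_t$ separating $\ell_t/h^3_t$ from the target scale $\sim \ell_t/(2h^2_t)$ is exactly what absorbs the unquantified error in Karamata's asymptotic, while the probability that the coupling fails is controlled by precisely the bound \eqref{eq:h}. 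Running the same estimate directly at level $\ell_t$ would not work, since one would then need a rate of convergence in Karamata's theorem, which slow variation does not supply. (Alternatively, one could invoke the joint functional limit theorem, Proposition \ref{prop:j1conv}, to replace $L(s_{\ell_t})$ by $L(M_{n_{\ell_t}-1})$ and bound the latter via the identity $\PP(M_{n_l - 1} > \lambda) = 1 - L(\lambda)/L(l)$, but this route requires evaluating the limit theorem at a random index and is less transparent.)
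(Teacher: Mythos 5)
Your proof is correct, but it takes a genuinely different route from the paper's. The paper obtains the estimate $\PP(L(s_{\ell_t}) < L(\ell_t)(1 - 1/h_t)) \to 1$ from the joint functional limit theorem (Proposition \ref{prop:j1conv}) for the extremal and sum processes, which implies that $L(s_l)/L(l)$ converges weakly to a $(0,1)$-valued random variable; it then converts this to the quantitative bound using the first inequality in \eqref{eq:h}, monotonicity of $L$ and Lemma \ref{lem:asymptl} — exactly the three ingredients your argument also uses in its final conversion steps. Where you differ is in avoiding the functional limit theorem altogether: after shifting down to the level $\ell_t/h_t^3$ (so that $s_{\ell_t}=s_{\ell_t/h_t^3}$ with probability at least $1-1/h_t$ by \eqref{eq:h}), you bound $\EE[s_{\ell_t/h_t^3}]$ by Wald's identity applied to the truncated variables $Y_i \wedge (\ell_t/h_t^3)$ and then by Karamata's theorem for the slowly varying function $1/L$, getting $\EE[s_{\ell_t/h_t^3}] \le (1+o(1))\,\ell_t/h_t^3$, after which Markov's inequality wins by the surplus factor $h_t$. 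This is a clean first-moment replacement for the distributional input from \eqref{eq:j1}, and it sidesteps two delicate points the paper implicitly relies on: the joint convergence of Lemma \ref{lem:tight} and the evaluation of a $J_1$ limit theorem at the random index $n_l - 1$. The trade-off is that the paper has to develop Proposition \ref{prop:j1conv} anyway (for Propositions \ref{prop:ex} and \ref{prop:sum2}), so its proof of this particular statement is essentially free once that machinery is in place, whereas your route is the natural one if one wants this proposition in isolation.
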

\begin{proof}
By the joint scaling limits for $M$ and $S$ in Proposition \ref{prop:j1conv}, as $l \to \infty$,
\[ L(s_l) / L(l) \]
converges in law to a certain $(0, 1)$-valued random variable. Hence, as $t \to \infty$,
\[ \PP \left(  L(s_{\ell_t}) < L(\ell_t)  (1  - 1 / h_t) \right) \to 1 . \]
Combining with the first statement in equation \eqref{eq:h} and the fact that $L$ is non-decreasing yields, as $t \to \infty$,
\[ \PP \left(  s_{\ell_t}  < \ell_t h^{-3}_t \right) \to 1 . \]
Finally, applying Lemma \ref{lem:asymptl} gives the result.
\end{proof}

\begin{proposition}[Lower bound on first exceedence]
\label{prop:ex}
As $t \to \infty$,
\[ \PP \left( Y_{n_{\ell_t}}  > \frac{ t h^2_t }{r_t} \right) \to 1 . \]
\end{proposition}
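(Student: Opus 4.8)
The plan is to exploit the fact that, for a slowly varying tail, an exceedance of a high level is typically much larger than the level itself, so that the crude bound $Y_{n_{\ell_t}} > \ell_t$ can be boosted by a factor growing like $h^3_t$. First I would identify the law of the first exceedance. By the definition of $n_l$ we always have $Y_{n_l} > l$, and decomposing over the value of $n_l$ and using the i.i.d.\ structure of the sequence $Y$, for any $y \ge l$,
\[ \PP(Y_{n_l} > y) = \sum_{k \ge 1} \PP(Y_1 \le l, \dots, Y_{k-1} \le l, \, Y_k > y) = \frac{\PP(\sigma_0 > y)}{\PP(\sigma_0 > l)} = \frac{L(l)}{L(y)} ; \]
in other words $Y_{n_l}$ has exactly the law of $\sigma_0$ conditioned on $\{\sigma_0 > l\}$.

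Next I would apply this with $l = \ell_t$ and $y = \ell_t h^3_t$ to obtain $\PP(Y_{n_{\ell_t}} > \ell_t h^3_t) = L(\ell_t)/L(\ell_t h^3_t)$. By the second inequality in \eqref{eq:h}, eventually $L(\ell_t h^3_t) < L(\ell_t)(1 + 1/h_t)$, and hence
\[ \PP\left( Y_{n_{\ell_t}} > \ell_t h^3_t \right) > \frac{1}{1 + 1/h_t} \to 1 \quad \text{as } t \to \infty . \]

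Finally I would convert the event $\{Y_{n_{\ell_t}} > \ell_t h^3_t\}$ into the desired event using Lemma \ref{lem:asymptl}: since $\ell_t \sim t/r_t$, for $t$ large enough $\ell_t \ge \tfrac12 \, t/r_t$, and since $h_t \to \infty$ we have $h^3_t \ge 2 h^2_t$ eventually, so that $\ell_t h^3_t \ge t h^2_t / r_t$ eventually and therefore $\{Y_{n_{\ell_t}} > \ell_t h^3_t\} \subseteq \{Y_{n_{\ell_t}} > t h^2_t / r_t\}$. Combining the two displays gives the result. There is no genuine obstacle in this argument; the only points requiring care are the identification of the law of the first exceedance $Y_{n_{\ell_t}}$ and the elementary bookkeeping relating $\ell_t$, $t/r_t$ and the powers of $h_t$.
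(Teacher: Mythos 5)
Your proof is correct, and it takes a genuinely more elementary route than the paper. The paper invokes the functional limit theorem for the extremal process (Proposition~\ref{prop:j1conv}) to conclude that $L(Y_{n_l})/L(l)$ converges in law to a $(1,\infty)$-valued random variable, and then combines this with the second inequality of \eqref{eq:h}, the monotonicity of $L$, and Lemma~\ref{lem:asymptl}. You instead observe that $Y_{n_l}$ has exactly the law of $\sigma_0$ conditioned on $\{\sigma_0 > l\}$, which follows from a one-line computation over the possible values of $n_l$ using the i.i.d.\ structure: $\PP(Y_{n_l} > y) = L(l)/L(y)$ for $y \ge l$. From this exact identity the bound $\PP(Y_{n_{\ell_t}} > \ell_t h_t^3) \ge 1/(1+1/h_t)$ drops out of \eqref{eq:h} immediately, and the rest is the same bookkeeping with Lemma~\ref{lem:asymptl}. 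Your approach buys a cleaner, self-contained argument for this one proposition (no weak convergence machinery, no need to know the limit law beyond the fact that it lives on $(1,\infty)$); the paper's approach buys uniformity, since the same functional limit theorem is the workhorse behind Propositions~\ref{prop:sum} and~\ref{prop:sum2}, which genuinely require joint information about $S$ and $M$ that an exact single-coordinate computation would not supply.
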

\begin{proof}
By the scaling limit for $M$ in Proposition \ref{prop:j1conv}, as $l \to \infty$,
\[   L(Y_{n_l}) / L(l) \]
converges in law to a certain $(1, \infty)$-valued random variable. Hence, as $t \to \infty$,
\[ \PP \left( L(Y_{n_{\ell_t}}) >  L(\ell_t) (1 + 1/h_t) \right) \to 1 . \]
Combining with the second statement in equation \eqref{eq:h}, the fact that $L$ is non-decreasing, and Lemma \ref{lem:asymptl} yields the result.
\end{proof}

\begin{proposition}[Bound on partial sum]
\label{prop:sum2}
As $t \to \infty$,
\[ \PP \bigg( \bar s_{\ell_t}^{h_t} < \frac{\ell_t}{h^3_t} \bigg) \to 1 . \]
\end{proposition}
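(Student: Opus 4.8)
The plan is to stochastically dominate $\bar s_{\ell_t}^{h_t}$ by an ordinary partial sum of the i.i.d.\ sequence $Y$ of length about $2r_t/h_t$, and then read off the bound from the sum‑process scaling limit of Proposition \ref{prop:j1conv}. Set $W_t := \lceil r_t/h_t \rceil - 1$, so that $W_t < r_t/h_t$, and split $\bar s_{\ell_t}^{h_t} = A_t + B_t$, where $A_t$ collects the terms $Y_n$ with $n < n_{\ell_t}$ and $1 \le n_{\ell_t} - n \le W_t$, and $B_t$ collects those with $n > n_{\ell_t}$ and $1 \le n - n_{\ell_t} \le W_t$; each of $A_t, B_t$ has at most $W_t$ summands. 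I would first show $\bar s_{\ell_t}^{h_t} \prec S_{2W_t}$, which reduces the claim to proving $\PP(S_{2W_t} < \ell_t/h_t^3) \to 1$.

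For the domination, observe that $n_{\ell_t} = \min\{n : Y_n > \ell_t\}$ is a stopping time for the natural filtration of $(Y_n)_{n \ge 1}$, so by the strong Markov property, conditionally on $\{n_{\ell_t} = k\}$ the variables $(Y_n)_{n > k}$ are i.i.d.\ copies of $Y_1$, the variables $(Y_n)_{n < k}$ are i.i.d.\ with the law of $Y_1$ conditioned on $\{Y_1 \le \ell_t\}$, and these two families are independent. Hence, conditionally on $n_{\ell_t}$: $B_t \stackrel{d}{=} S_{W_t}$; $A_t \prec S_{W_t}$, since conditioning a non-negative variable to lie below $\ell_t$ makes it stochastically smaller and $A_t$ has at most $W_t$ terms; and $A_t$ is independent of $B_t$. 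A coupling then gives $A_t + B_t \prec S'_{W_t} + S''_{W_t} \stackrel{d}{=} S_{2W_t}$ with $S', S''$ independent copies of the sum process, and since this holds for every value of $n_{\ell_t}$ it holds unconditionally. (One could instead bound $A_t$ by $s_{\ell_t}$, already controlled inside the proof of Proposition \ref{prop:sum}, so that only $B_t$ needs the strong Markov step; handling both via $S_{2W_t}$ is just more symmetric.)

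It remains to bound $S_{2W_t}$. Here a first-moment estimate is useless, since $\EE Y_1 = \int_0^\infty dx/L(x) = \infty$ for any slowly varying $L$; instead I would use exactly the phenomenon underlying Proposition \ref{prop:j1conv}, that the sum is dominated by its largest term. Its marginal at time $1$ gives $n^{-1}L(S_n) \Rightarrow m_1$ as $n \to \infty$ with $m_1$ a.s.\ finite, so $\{n^{-1}L(S_n)\}_{n \ge 1}$ is tight; as $2W_t \to \infty$ and the slowly growing sequence $g_t := (h_t-1)/2 \to \infty$, tightness yields $\PP\big(L(S_{2W_t}) \le 2W_t\,g_t\big) \to 1$. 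On that event $L(S_{2W_t}) \le 2W_t\,g_t = W_t(h_t-1) < (r_t/h_t)(h_t-1) = r_t(1-1/h_t) = L(\ell_t)(1-1/h_t) < L(\ell_t/h_t^3)$ by the first estimate in \eqref{eq:h}, and since $L$ is non-decreasing this forces $S_{2W_t} \le \ell_t/h_t^3$ (shrinking $g_t$ slightly and applying slow variation once more upgrades this to a strict inequality). Combining with $\bar s_{\ell_t}^{h_t} \prec S_{2W_t}$ gives $\PP(\bar s_{\ell_t}^{h_t} < \ell_t/h_t^3) \to 1$. The only genuinely non-routine ingredient is this passage through the scaling limit of the sum process; the rest is bookkeeping with the slow variation property and the defining inequalities \eqref{eq:h} of $h_t$.
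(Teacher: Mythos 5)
Your proposal is correct and follows essentially the same route as the paper: stochastically dominate $\bar s_{\ell_t}^{h_t}$ by a partial sum $S_{2W_t}$ of roughly $2r_t/h_t$ i.i.d.\ copies of $Y_1$ (the paper writes $S_{2r_t/h_t}$, glossing over the integer-index issue that your $W_t$ handles carefully), then use the $S$-process scaling limit from Proposition \ref{prop:j1conv} to show $L(S_{2W_t}) < r_t(1-1/h_t)$ with probability tending to one, and finish via the first inequality in \eqref{eq:h} together with monotonicity of $L$. You spell out the strong Markov / conditioning step behind the domination and the tightness step behind the $O_P(r_t/h_t)$ bound, both of which the paper states tersely; your final parenthetical about shrinking $g_t$ is unnecessary, since $L(S_{2W_t}) < L(\ell_t/h_t^3)$ already forces $S_{2W_t} < \ell_t/h_t^3$ by monotonicity alone.
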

\begin{proof}
We first claim that $\bar s_{\ell_t}^{h_t}$ is stochastically dominated by $S_{2r_t/h_t}$. This is since
\[  Y_i \stackrel{d}{=} \begin{cases}  
 Y_1 \big| \{ Y_1 \le \ell_t \}  \ \prec \ Y_1,  & \quad \text{if } i < n_{\ell_t}, \\
  Y_1,      & \quad \text{if } i > n_{\ell_t}  , \\
 \end{cases}  \]
 where $ Y_1 \big| \{ Y_1 \le \ell_t \} $ denotes the random variable $Y_1$ conditioned on the event that $\{ Y_1 \le \ell_t \} $, and moreover, for any $x > 0$ and $n \in \mathbb{N}$,
\[ | \{i: 1 \le |i-n| < x \}  | \le 2 x. \]
By the scaling limit for $S$ in Proposition \ref{prop:j1conv}, as $l \to \infty$,
\[ \frac{L(S_{2r_t/h_t})}{2r_t/h_t} \]
converges in law to a certain strictly-positive random variable. This implies that, as $t \to \infty$,
\[ \PP \left( L ( \bar s_{\ell_t}^{h_t} ) <  r_t (1 - 1/h_t) \right) = \PP \left( L ( \bar s_{\ell_t}^{h_t} ) <  L(\ell_t) (1 - 1/h_t) \right)\to 1 . \]
Combining with the first statement in equation \eqref{eq:h} and the fact that $L$ is non-decreasing yields the result. 
\end{proof}

\subsection{The trapping landscape is sufficiently inhomogeneous}
We are now in a position to prove that the events $\mathcal{A}^h_t, \mathcal{B}^h_t$ and $\mathcal{C}^h_t$ all hold eventually with overwhelming probability.

\begin{proposition}
\label{prop:eventa}
As $t \to \infty$,
\[ \mathbf{P}(\mathcal{A}^h_t) \to 1. \]
\end{proposition}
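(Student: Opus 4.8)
The plan is to express the event $\mathcal{A}^h_t = \{S_t d_t < t/h_t\}$ in terms of the two building blocks $S_t$ (the sum of trap values strictly between the two localisation sites) and $d_t = \max_{z \in \Gamma_t}|z|$, and then to bound each factor separately using the results already assembled in Section \ref{sec:seq}. The crucial observation is that $S_t$ and $d_t$ each decompose into a positive-half-line contribution and a negative-half-line contribution, and on each half-line these are exactly the random variables $s_{\ell_t}$ and $n_{\ell_t}$ studied in Propositions \ref{prop:dist} and \ref{prop:sum} (applied to the i.i.d.\ sequence $\{\sigma_z\}_{z \ge 1}$, and independently to $\{\sigma_{-z}\}_{z \ge 1}$). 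Indeed $S_t = s_{\ell_t}^{+} + s_{\ell_t}^{-}$ where $s^{\pm}$ denote the two independent copies, and $d_t = \max(n_{\ell_t}^{+}, n_{\ell_t}^{-}) \le n_{\ell_t}^{+} + n_{\ell_t}^{-}$.

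First I would apply Proposition \ref{prop:sum} to each half-line to get that, with probability tending to one, $S_t < t/(r_t h_t^2)$ (absorbing the factor $2$ from the two independent copies into the $h_t^2$, since $2/h_t^2 \le 1/h_t$ eventually, or simply by noting $s^{+}_{\ell_t}, s^{-}_{\ell_t} < t/(2 r_t h_t^2)$ each). Next I would apply Proposition \ref{prop:dist} to each half-line: since $n_{\ell_t}/L(\ell_t) = n_{\ell_t}/r_t$ converges in law to a unit-mean exponential, the quantity $d_t / r_t$ is bounded above in probability, so that for a suitable slowly growing correction $d_t < r_t h_t$ with probability tending to one (here one uses that $h_t \to \infty$, so any fixed stochastic bound is eventually beaten). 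Then on the intersection of these two events we have
\[ S_t d_t < \frac{t}{r_t h_t^2} \cdot r_t h_t = \frac{t}{h_t}, \]
which is exactly $\mathcal{A}^h_t$; since each of the two events has probability tending to one, so does $\mathbf{P}(\mathcal{A}^h_t)$.

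The only mild subtlety — and the step I would be most careful about — is the bookkeeping of the slowly growing functions: one must check that the same $h_t$ (chosen in Section 3.1 to satisfy \eqref{eq:h} and $h_t^2 = o(r_t)$) simultaneously makes both bounds $S_t < t/(r_t h_t^2)$ and $d_t < r_t h_t$ hold with high probability. The first is handled verbatim by Proposition \ref{prop:sum}, whose statement already contains the precise power of $h_t$ we need; the second follows because $d_t/r_t$ is tight and $h_t \to \infty$, which requires no compatibility condition at all. So there is no real obstacle here — the proposition is essentially a repackaging of Propositions \ref{prop:dist} and \ref{prop:sum} together with the trivial fact that $(S_t d_t)$ factorises, and the proof is short.
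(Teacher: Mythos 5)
Your proposal is correct and follows essentially the same route as the paper: bound $d_t < r_t h_t$ in probability via Proposition~\ref{prop:dist} applied to each half-line, bound $S_t < t/(r_t h_t^2)$ via Proposition~\ref{prop:sum} applied to each half-line, and multiply. The only cosmetic difference is that you bound $d_t$ by the sum $n^+_{\ell_t} + n^-_{\ell_t}$ rather than the maximum, and the paper is slightly more careful to take the non-positive half-line as $\{\sigma_z\}_{z \in \mathbb{N}^- \cup \{0\}}$ so that $\sigma_0$ is not omitted from the decomposition of $S_t$; neither of these affects the argument.
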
 
\begin{proof}
Applying Proposition \ref{prop:dist} to the sequences $\{\sigma_z\}_{z \in \mathbb{N}^+}$ and $\{\sigma_z\}_{z \in \mathbb{N}^- \cup \{0\}}$ we have that, as $t \to \infty$,
\begin{align}
\label{eq:first}
\mathbf{P} \left( d_t < r_t h_t \right) \to 1  .
\end{align}
Similarly, applying Proposition \ref{prop:sum} to the same sequences, as $t \to \infty$,
\begin{align}
\label{eq:second}
 \mathbf{P} \left( S_t <  t / (r_t h^2_t) \right) \to 1   .
 \end{align}
Combining equations \eqref{eq:first} and \eqref{eq:second} yields the result.
\end{proof}

\begin{proposition}
\label{prop:eventb}
As $t \to \infty$,
\[ \mathbf{P}(\mathcal{B}^h_t) \to 1 . \]
\end{proposition}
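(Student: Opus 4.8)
The plan is to reduce the statement directly to Proposition \ref{prop:ex}, applied separately to the two one-sided sequences making up the trapping landscape. Recall that $m_t = \min\{\sigma_{Z_t^{(1)}}, \sigma_{Z_t^{(2)}}\}$ by definition, and note that, in the notation of Section \ref{sec:seq}, $\sigma_{Z_t^{(1)}}$ is precisely $Y_{n_{\ell_t}}$ for the i.i.d.\ sequence $\{\sigma_z\}_{z \in \mathbb{N}^+}$, while $\sigma_{Z_t^{(2)}}$ is $Y_{n_{\ell_t}}$ for the i.i.d.\ sequence $\{\sigma_z\}_{z \in \mathbb{N}^- \cup \{0\}}$ (indexing the negative half-line by distance from the origin); this identification is immediate from the definitions of $Z_t^{(1)}$, $Z_t^{(2)}$ and of $n_l$ as the index of the first exceedence of a level.

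Granting this identification, I would apply Proposition \ref{prop:ex} to each of the two sequences in turn to obtain, as $t \to \infty$,
\[ \PP \left( \sigma_{Z_t^{(i)}} > \frac{t h^2_t}{r_t} \right) \to 1 \quad \text{for } i = 1, 2 . \]
Intersecting these two events, each of asymptotic probability one, gives that $m_t > t h^2_t / r_t$ with probability tending to one, which is exactly the assertion $\mathbf{P}(\mathcal{B}^h_t) \to 1$.

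There is no substantive obstacle in this proposition: all the analytic content already resides in Proposition \ref{prop:ex} (hence in the extremal scaling limit of Proposition \ref{prop:j1conv}, in Lemma \ref{lem:asymptl}, and in the choice of $h_t$ via \eqref{eq:h}). The only point requiring a little care is the bookkeeping identifying the deeper of the two traps in $\Gamma_t$ with the first exceedence $Y_{n_{\ell_t}}$ of level $\ell_t$ in the appropriate one-sided i.i.d.\ sequence, together with the fact that the two half-lines furnish genuinely independent copies of such a sequence, so that the union bound on the complements is legitimate.
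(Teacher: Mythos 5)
Your proof is correct and takes essentially the same route as the paper, which simply says to apply Proposition \ref{prop:ex} to the two one-sided sequences; you have spelled out the identification of $\sigma_{Z_t^{(i)}}$ with the first exceedence $Y_{n_{\ell_t}}$ and the intersection of the two high-probability events. (One small remark: the final step needs only $\mathbf{P}((A\cap B)^c)\le \mathbf{P}(A^c)+\mathbf{P}(B^c)$, which holds without independence of the two half-lines, so that observation is harmless but not actually required for the union bound.)
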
 
\begin{proof}
Similarly to the above, apply Proposition \ref{prop:ex} to the sequences $\{\sigma_z\}_{z \in \mathbb{N}^+}$ and $\{\sigma_z\}_{z \in \mathbb{N}^- \cup \{0\}}$.
\end{proof}

\begin{proposition}
\label{prop:eventc}
As $t \to \infty$,
\[ \mathbf{P}(\mathcal{C}^h_t) \to 1 .\]
\end{proposition}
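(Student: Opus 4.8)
The plan is to prove $\mathbf{P}(\mathcal{C}^h_t) \to 1$ by relating the $\mathbf{P}$-measurable quantity $\bar S_t$ to the abstract quantity $\bar s^{h_t}_{\ell_t}$ from Section \ref{sec:seq}, and then applying Proposition \ref{prop:sum2}. Recall that $\mathcal{C}^h_t = \{\bar S_t < \ell_t/h_t\}$ where $\bar S_t = \sum_{i=1,2} \sum_{1 \le |z - Z_t^{(i)}| < r_t/h_t} \sigma_z$, i.e.\ the sum of the traps in two windows of radius $r_t/h_t$ centred on the two closest exceedences of $\ell_t$ to the origin on each half-line. First I would split $\bar S_t$ into its two contributions, one from $\mathbb{Z}^+$ and one from $\mathbb{Z}^- \cup \{0\}$, and apply the analysis of the abstract sequence separately to each: setting $Y_n := \sigma_n$ for the positive half-line (so $n_{\ell_t} = Z^{(1)}_t$, $L(\ell_t) = r_t$, and the window $1 \le |n - n_{\ell_t}| < L(\ell_t)/h_t = r_t/h_t$ matches exactly the window defining $\bar s^{h_t}_{\ell_t}$), and $Y_n := \sigma_{-n}$ for the other half-line, we see that each half-line contribution to $\bar S_t$ is equal in distribution to $\bar s^{h_t}_{\ell_t}$.

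Next I would apply Proposition \ref{prop:sum2} to each of the two sequences, obtaining that, as $t \to \infty$, each half-line contribution is $< \ell_t/h^3_t$ with probability tending to one. Summing the two bounds (via a union bound over the two failure events) gives $\bar S_t < 2\ell_t/h^3_t$ with $\mathbf{P}$-probability tending to one, and since $2/h^3_t < 1/h_t$ eventually (as $h_t \to \infty$), this yields $\bar S_t < \ell_t/h_t$ eventually with overwhelming probability, which is precisely $\mathcal{C}^h_t$. Combining with Propositions \ref{prop:eventa} and \ref{prop:eventb}, and using that the intersection of finitely many events each of $\mathbf{P}$-probability tending to one again has $\mathbf{P}$-probability tending to one, establishes \eqref{eq:assump}.

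One mild subtlety to address is that the two half-lines share the site $0$: the origin belongs to the sequence $\{\sigma_z\}_{z \in \mathbb{N}^- \cup \{0\}}$, and the windows around $Z^{(1)}_t$ and $Z^{(2)}_t$ could in principle both contain $0$ (or, more relevantly, the second window could contain some $z > 0$ that is also near $Z^{(1)}_t$). This is harmless: a union bound only ever overcounts, so bounding $\bar S_t$ by the sum of the two independent half-line quantities $\bar s^{h_t}_{\ell_t}$ (each of which already sums all nearby traps, positive or negative) is a legitimate stochastic upper bound. The main obstacle, such as it is, is simply being careful that the window in the definition of $\bar s^h_l$ — namely $1 \le |n - n_l| < L(l)/h$ — matches the window in the definition of $\bar S_t$ — namely $1 \le |z - Z^{(i)}_t| < r_t/h_t$ — which it does because $L(\ell_t) = r_t$ and $h = h_t$; once this bookkeeping is checked the result is immediate from Proposition \ref{prop:sum2}.
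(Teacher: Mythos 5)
Your overall strategy is the same as the paper's (split $\bar S_t$ between the two half-lines, recognise each piece as an instance of $\bar s^{h_t}_{\ell_t}$, apply Proposition \ref{prop:sum2}, and finish with $2/h_t^3 < 1/h_t$), and the bookkeeping that matches the window $1 \le |n - n_{\ell_t}| < L(\ell_t)/h_t$ with $1 \le |z - Z_t^{(i)}| < r_t/h_t$ is correct. However, your treatment of the boundary subtlety is flawed, and in a way that makes the stated stochastic bound go the wrong direction. The abstract quantity $\bar s^h_l$ of Section \ref{sec:seq} is built from a one-sided sequence $Y = \{Y_n\}_{n \in \mathbb{N}}$, so it does \emph{not} ``already sum all nearby traps, positive or negative'': if the window $\{z : |z - Z_t^{(1)}| < r_t/h_t\}$ spills over into $z \le 0$, then those $\sigma_z$ appear in $\bar S_t$ but are absent from the corresponding $\bar s^{h_t}_{\ell_t}$ built from $\{\sigma_z\}_{z \ge 1}$. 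Consequently the sum of the two half-line copies of $\bar s^{h_t}_{\ell_t}$ is a \emph{lower} bound for $\bar S_t$ in the spillover case, not an upper bound, and your ``union bound only overcounts'' reasoning does not apply. (In the worst case the spillover window could even contain the large trap at $Z_t^{(2)}$.)

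The missing ingredient is Proposition \ref{prop:dist}: by the convergence $Z_t^{(1)}/r_t \Rightarrow \mathcal{E}$ (and symmetrically for $-Z_t^{(2)}$), with $\mathbf{P}$-probability tending to one neither of the windows $\{z : |z - Z_t^{(i)}| < r_t/h_t\}$ contains the origin, so each lies entirely on its own half-line. On that event each half-line contribution to $\bar S_t$ is \emph{exactly} an independent copy of $\bar s^{h_t}_{\ell_t}$ (no stochastic domination required), and your application of Proposition \ref{prop:sum2} then goes through precisely as you describe. This is the paper's proof; adding the Proposition \ref{prop:dist} step to your argument closes the gap.
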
 
\begin{proof}
By Proposition \ref{prop:dist}, as $t \to \infty$, neither of the sets
\[ \{ z : |z-Z_t^{(i)}|  < r_t /h_t \},  \quad i = 1,2  \]
contains the origin with overwhelming probability. On this event, each of the sums
\[  \sum_{0 < |z-Z_t^{(i)}| < r_t/h_t } \sigma_z , \quad i = 1,2 \]
is distributed as an independent copy of the random variable $\bar s_{\ell_t}^{h_t}$ defined in Section \ref{sec:seq}. Applying Proposition \ref{prop:sum2} yields the result.
\end{proof}
%
\bibliography{paper}{}

\begin{thebibliography}{10}

\bibitem{BenArous13}
G.~{Ben Arous}, M.~Cabezas, J.~\v{C}ern\'{y}, and R.~Royfman.
\newblock Randomly trapped random walks.
\newblock {\em Ann. Probab. (to appear)}, 2014.

\bibitem{BenArous14}
G.~{Ben Arous} and A.~Fribergh.
\newblock Biased random walks on random graphs.
\newblock {\em arXiv:1406.5076}, 2014.

\bibitem{BenArous12}
G.~{Ben Arous} and O.~G{\"u}n.
\newblock Universality and extremal aging for dynamics of spin glasses on
  subexponential time scales.
\newblock {\em Comm. Pure Appl. Math.}, 65:77--127, 2012.

\bibitem{BenArous06}
G.~{Ben Arous} and J.~\v{C}ern\'{y}.
\newblock Dynamics of trap models.
\newblock {\em Math. Stat. Physics Lecture Notes -- Les Houches Summer School},
  83, 2006.

\bibitem{Bertin03}
E.~M. Bertin and J.~P. Bouchaud.
\newblock Subdiffusion and localization in the one-dimensional trap model.
\newblock {\em Phys. Rev.}, E 67:026128, 2003.

\bibitem{Borodin94}
A.~N. Borodin and A.~Ibragimov.
\newblock {\em Limit theorems for functionals of random walks}.
\newblock Proceedings of the Steklov Institute of Mathematics, 1994.

\bibitem{Bouchaud92}
J.~P. Bouchaud.
\newblock Weak ergodicity breaking and aging in disordered systems.
\newblock {\em J. Phys. I (France)}, 2:1705--1713, 1992.

\bibitem{Bovier13}
A.~Bovier, V.~Gayrard, and A.~\v{S}vejda.
\newblock Convergence to extremal processes in random environments and extremal
  ageing in {SK} models.
\newblock {\em Probab. Theory Related Fields}, 157:251--283, 2013.

\bibitem{Croydon13}
D.~Croydon, A.~Fribergh, and T.~Kumagai.
\newblock Biased random walk on critical {G}alton--{W}atson trees conditioned
  to survive.
\newblock {\em Probab. Theory Related Fields}, 157:453--507, 2013.

\bibitem{Croydon14}
D.~Croydon and S.~Muirhead.
\newblock Functional limit theorems for the {B}ouchaud trap model with slowly
  varying traps.
\newblock {\em Stoch. Process. Appl.}, 125(5):1980--2009, 2015.

\bibitem{FIN99}
L.~R.~G. Fontes, M.~Isopi, and C.~M. Newman.
\newblock Chaotic time dependence in a disordered spin system.
\newblock {\em Probab. Theory Related Fields}, 115(3):417--443, 1999.

\bibitem{FIN02}
L.~R.~G. Fontes, M.~Isopi, and C.~M. Newman.
\newblock Random walks with strongly inhomogeneous rates and singular
  diffusions: Convergence, localization and aging in one dimension.
\newblock {\em Ann. Probab.}, 30(2):579--604, 2002.

\bibitem{Gayrard12}
V.~Gayrard.
\newblock Convergence of clock process in random environments and aging in
  {B}ouchaud's asymmetric trap model on the complete graph.
\newblock {\em Electron. J. Probab.}, 17(58):1--33, 2012.

\bibitem{Gun13}
O.~G{\"u}n.
\newblock Extremal aging for trap models.
\newblock {\em arXiv:1312.1137}, 2013.

\bibitem{Jacod87}
J.~Jacod and A.~N. Shiryaev.
\newblock {\em Limit Theorems for Stochastic Processes}.
\newblock Springer, 2002.

\bibitem{Kasahara86}
Y.~Kasahara.
\newblock A limit theorem for sums of i.i.d. random variables with slowly
  varying tail probability.
\newblock {\em J. Math. Kyoto. Univ.}, 37:197--205, 1986.

\bibitem{Lamperti64}
J.~Lamperti.
\newblock On extreme order statistics.
\newblock {\em Ann. Math. Statist.}, 35:1726--1737, 1964.

\bibitem{Revesz90}
P.~R\'{e}v\'{e}sz.
\newblock {\em Random Walk in Random and Non-Random Environments}.
\newblock World Scientific, 1983.

\bibitem{Whitt02}
W.~Whitt.
\newblock {\em Stochastic-Process Limits}.
\newblock Springer, 2002.

\end{thebibliography}
\bibliographystyle{plain}
\end{document}